\title{Sample eigenvalue based detection of high-dimensional signals in white noise using relatively few samples}
\begin{document}
%
%
% author names and IEEE memberships
% note positions of commas and nonbreaking spaces ( ~ ) LaTeX will not break
% a structure at a ~ so this keeps an author's name from being broken across
% two lines.
% use \thanks{} to gain access to the first footnote area
% a separate \thanks must be used for each paragraph as LaTeX2e's \thanks
% was not built to handle multiple paragraphs
\author{Raj Rao Nadakuditi
        and Alan Edelman
        \thanks{Department of Mathematics, Massachusetts Institute of Technology, Email: \{raj\},\{edelman\}@mit.edu, Phone: (857) 891 8303, Fax: (617) 253-4358 }}% <-this % stops a space

%\thanks{M. Shell is with the Georgia Institute of Technology.}}
% note the % following the last \IEEEmembership and also the first \thanks - 
% these prevent an unwanted space from occurring between the last author name
% and the end of the author line. i.e., if you had this:
% 
% \author{....lastname \thanks{...} \thanks{...} }
%                     ^------------^------------^----Do not want these spaces!
%
% a space would be appended to the last name and could cause every name on that
% line to be shifted left slightly. This is one of those "LaTeX things". For
% instance, "A\textbf{} \textbf{}B" will typeset as "A B" not "AB". If you want
% "AB" then you have to do: "A\textbf{}\textbf{}B"
% \thanks is no different in this regard, so shield the last } of each \thanks
% that ends a line with a % and do not let a space in before the next \thanks.
% Spaces after \IEEEmembership other than the last one are OK (and needed) as
% you are supposed to have spaces between the names. For what it is worth,
% this is a minor point as most people would not even notice if the said evil
% space somehow managed to creep in.

%
% The paper headers
\markboth{Sample eigenvalue based detection}{Rao and Edelman}
% The only time the second header will appear is for the odd numbered pages
% after the title page when using the twoside option.
% 
% *** Note that you probably will NOT want to include the author's name in ***
% *** the headers of peer review papers.                                   ***

% If you want to put a publisher's ID mark on the page
% (can leave text blank if you just want to see how the
% text height on the first page will be reduced by IEEE)
%\pubid{0000--0000/00\$00.00~\copyright~2002 IEEE}

% use only for invited papers
%\specialpapernotice{(Invited Paper)}

% make the title area
\maketitle

\begin{abstract}
The detection and estimation of signals in noisy, limited data is a problem of interest to many scientific and engineering communities. We present a mathematically justifiable, computationally simple, sample eigenvalue based procedure for estimating the number of high-dimensional signals in white noise using relatively few samples. The main motivation for considering a sample eigenvalue based scheme is the computational simplicity and the robustness to eigenvector modelling errors which are can adversely impact the performance of estimators that exploit information in the sample eigenvectors. 

There is, however, a price we pay by discarding the information in the sample eigenvectors; we highlight a fundamental asymptotic limit of sample eigenvalue based detection of weak/closely spaced high-dimensional signals from a limited sample size. This motivates our heuristic definition of the \textit{effective number of identifiable signals} which is equal to the number of ``signal'' eigenvalues of the population covariance matrix which exceed the noise variance by a factor strictly greater than \mbox{$1+\sqrt{\frac{\textrm{Dimensionality of the system}}{\textrm{Sample size}}}$}. 

The fundamental asymptotic limit brings into sharp focus why, when there are too few samples available so that the effective number of signals is less than the actual number of signals, underestimation of the model order is unavoidable (in an asymptotic sense) when using any sample eigenvalue based detection scheme, including the one proposed herein. The analysis reveals why adding more sensors can only exacerbate the situation. Numerical simulations are used to demonstrate that the proposed estimator, like Wax and Kailath's MDL based estimator, consistently estimates the true number of signals in the dimension fixed, large sample size limit and the effective number of identifiable signals, unlike Wax and Kailath's MDL based estimator, in the large dimension, (relatively) large sample size limit.
\end{abstract}

%\begin{keywords}
%Detection, random matrices, sample covariance matrix
%\end{keywords}
% Note that keywords are not normally used for peerreview papers.

% For peer review papers, you can put extra information on the cover
% page as needed:
\begin{center} \bfseries EDICS Category: SSP-DETC Detection; SAM-SDET Source detection\end{center}
%
% For peerreview papers, inserts a page break and creates the second title.
% Will be ignored for other modes.
\IEEEpeerreviewmaketitle

\section{Introduction}
The observation vector, in many signal processing applications, can be modelled as a superposition of a finite number of signals embedded in additive noise. Detecting the number of signals present becomes a key issue and is often the starting point for the signal parameter estimation problem. When the signals and the noise are assumed, as we do in this paper, to be samples of a stationary, ergodic Gaussian vector process, the sample covariance matrix formed from $m$ observations has the Wishart distribution \cite{wishart28a}. This paper uses an information theoretic approach, inspired by the seminal work of Wax and Kailath \cite{kailath-wax}, for determining the number of signals in white noise from the eigenvalues of the Wishart distributed empirical covariance matrix formed from \textit{relatively few samples}. 

The reliance of the Wax and Kailath estimator and their successors \cite{wong90a,shah94a,liavas01a,fishler00a,fishler02a}, to list a few, on the distributional properties of the eigenvalues of \textit{non-singular} Wishart matrices render them inapplicable in high-dimensional, sample starved settings where the empirical covariance matrix is singular. Ad-hoc modifications to such estimators are often not mathematically justified and it is seldom clear, even using simulations as in \cite{liavas01a}, whether a fundamental limit of detection is being encountered \textit{vis a vis}  the chronically reported symptom of underestimating the number of signals. 

This paper addresses both of these issues using relevant results \cite{johansson_clt_herm,jonsson82a,BaiS04,dumitriu05a} from large random matrix theory. The main contributions of this paper are 1) the development of a mathematically justified, computationally simple, sample eigenvalue based signal detection algorithm that operates effectively in sample starved settings and, 2) the introduction of the concept of \textit{effective number of (identifiable) signals} which brings into sharp focus a fundamental limit in the identifiability, under sample size constraints, of closely spaced/low level signals  using sample eigenvalue based detection techniques of the sort developed in this paper.

The proposed estimator exploits the distributional properties of the trace of powers, \ie, the moments of the eigenvalues, of (singular and non-singular) Wishart distributed large dimensional sample covariance matrices. The definition of \textit{effective number of identifiable signals} is based on the mathematically rigorous results of Baik-Silverstein \cite{BaikS06}, Paul \cite{Paul05a} and Baik et al \cite{BaikBP04} and the heuristic derivation of the first author \cite{raj:thesis}. This concept captures the fundamental limit of sample eigenvalue based detection by  explaining why, in the large system relatively large sample size limit, \textit{if the signal level is below a threshold} that depends on the noise variance, sample size and the dimensionality of the system, \textit{then reliable sample eigenvalue based detection is not possible}. This brings into sharp focus the fundamental undetectability of weak/closely spaced signals using sample eigenvalue based schemes when too few samples are available. Adding more sensors will only exacerbate the problem by raising the detectability threshold.

Conversely, if the signal level is above this threshold, and the dimensionality of the system is large enough, then reliable detection using the proposed estimator is possible. We demonstrate this via numerical simulations that illustrate the superiority of the proposed estimator with the respect to the Wax-Kailath MDL based estimator. Specifically, simulations reveal that while both the new estimator  and the Wax-Kailath MDL estimator are consistent estimators of the number of signals $k$ in the dimensionality $n$ fixed, sample size $m \to \infty $ sense, the MDL estimator is an inconsistent estimator of the \textit{effective number of signals} in the large system, large sample size limit, \ie, in $n,m(n) \to \infty$ limit where the ratio $n/m(n) \to c \in (0,\infty)$ sense. Simulations suggest that the new estimator is a consistent estimator of the effective number of signals in the $n,m(n) \to \infty$ with $n/m(n) \to c\in (0,\infty)$ sense. We note that simulations will demonstrate the applicability of the proposed estimator in moderate dimensional settings as well.

The paper is organized as follows. The problem formulation in Section \ref{sec:lrcf problem formulation} is followed by a summary in Section \ref{sec:relevant rmt} of the relevant properties of the eigenvalues of large dimensional Wishart distributed sample covariance matrices.  An estimator for the number of signals present that exploits these results is derived in Section \ref{sec:number of signals}. An extension of these results to the frequency domain is discussed in Section \ref{sec:lrcf frequency domain}. Consistency of the proposed estimator and the concept of \textit{effective number of signals} is discussed in Section \ref{sec:consistent number of signals}. Simulation results that illustrate the superior performance of the new method in high dimensional, sample starved settings are presented in Section \ref{sec:lrcf simulations}; some concluding remarks are presented in Section \ref{sec:lrcf conclusion}.

\section{Problem formulation}
\label{sec:lrcf problem formulation}
We observe $m$ samples (``snapshots'') of possibly signal bearing $n$-dimensional snapshot vectors ${\bf x}_{1}, \ldots, {\bf x}_{m}$ where for each $i$, ${\bf x}_{i} \sim \mathcal{N}_{n}(0,{\bf R})$ and ${\bf x}_{i}$ are mutually independent.  The snapshot vectors are modelled as 
\begin{equation}\label{eq:superposition problem}
{\bf x}_{i} = 
\begin{cases}
\,{\bf z}_{i} &\textrm{No Signal} \\
{\bf A}\,{\bf s}_{i}+{\bf z}_{i} &\textrm{Signal Present}\\
\end{cases} \qquad \textrm{for } i = 1,\ldots,m,
\end{equation}
where ${\bf z}_{i} \sim \mathcal{N}_{n}(0,\sigma^{2}{\bf I})$, denotes an $n$-dimensional (real or circularly symmetric complex) Gaussian noise vector where $\sigma^{2}$ is assumed to be unknown, ${\bf s}_{i} \sim \mathcal{N}_{k}({\bf 0},{\bf R}_{s})$ denotes a $k$-dimensional (real or circularly symmetric complex) Gaussian signal vector with covariance ${\bf R}_{s}$, and ${\bf A}$ is a $n \times k$  unknown non-random matrix. In array processing applications, the $j$-th column of the matrix ${\bf A}$ encodes the parameter vector associated with the $j$-th signal whose magnitude is described by the $j$-the element of ${\bf s}_{i}$.

Since the signal and noise vectors are independent of each other, the covariance matrix of ${\bf x}_{i}$ can be decomposed as
\begin{equation}
{\bf R} =  {\bf \Psi} + \sigma^{2} {\bf I}
\end{equation}
where
\begin{equation}
{\bf \Psi} = {\bf A}{\bf R}_{s}{\bf A}' ,
\end{equation}
with $'$ denoting the conjugate transpose. Assuming that the matrix ${\bf A}$ is of full column rank, \ie, the columns of ${\bf A}$ are linearly independent, and that the covariance matrix of the signals ${\bf R}_{s}$ is nonsingular, it follows that the rank of ${\bf \Psi}$ is $k$. Equivalently, the $n-k$ smallest eigenvalues of ${\bf \Psi}$ are equal to zero. 

If we denote the eigenvalues of ${\bf R}$ by $\lambda_{1}\geq \lambda_{2} \geq \ldots \geq \lambda_{n}$ then it follows that the smallest $n-k$ eigenvalues of ${\bf R}$ are all equal to $\sigma^{2}$ so that
\begin{equation}
\lambda_{k+1}= \lambda_{k+2} = \ldots = \lambda_{n} = \lambda = \sigma^{2}.
\end{equation}
Thus, if the true covariance matrix ${\bf R}$ were known \textit{apriori}, the dimension of the signal vector $k$ can be determined from the multiplicity of the smallest eigenvalue of ${\bf R}$. When there is no signal present, all the eigenvalues of ${\bf R}$ will be identical. The problem in practice is that the covariance matrix ${\bf R}$ is unknown so that such a straight-forward algorithm cannot be used. The signal detection and estimation problem is hence posed in terms of an inference problem on $m$ samples of $n$-dimensional multivariate real or complex Gaussian snapshot vectors.  

Inferring the number of signals from these $m$ samples reduces the signal detection problem to a model selection problem for which there are many approaches. A classical approach to this problem, developed by Bartlett \cite{bartlett54a} and Lawley \cite{lawley56a}, uses a sequence of hypothesis tests. Though this approach is sophisticated, the main problem is the subjective judgement needed by the practitioner in selecting the threshold levels for the different tests. 

Information theoretic criteria for model selection such as those developed by Akaike \cite{akaike73a,akaike74a}, Schwartz \cite{schwartz78a} and Rissanen \cite{rissanen78a} address this problem by proposing the selection of the model which gives the minimum \textit{information criteria}. The criteria for the various approaches is generically a function of the log-likelihood of the maximum likelihood estimator of the parameters of the model and a term which depends on the number of parameters of the model that penalizes overfitting of the model order. 

For the problem formulated above, Kailath and Wax \cite{kailath-wax} propose an estimator for the number of signals (\textit{assuming $m>n$ and ${\bf x}_{i} \in \mathbb{C}^{n}$}) based on the eigenvalues $l_{1} \geq l_{2} \geq \ldots \geq l_{n}$ of the sample covariance matrix (SCM) defined by
\begin{equation}
\widehat{{\bf R}} = \frac{1}{m} \sum_{i=1}^{m} {\bf x}_{i} {\bf x}_{i}' = \frac{1}{m} {\bf X}{\bf X}'
\end{equation}
where ${\bf X}= [{\bf x}_{1}| \ldots| {\bf x}_{m}]$ is the matrix of observations (samples).  The Akaike Information Criteria (AIC) form of the estimator is given by
\begin{equation}\label{eq:aic est}
\hat{k}_{{\rm AIC}} = \argmin  -2(n-k)m \log \frac{g(k)}{a(k)}+ 2k(2n-k) \qquad \textrm{for } k \in \mathbb{N}: 0 \leq k < n
\end{equation}
while the Minimum Descriptive Length (MDL) criterion is given by
\begin{equation}\label{eq:mdl est}
\hat{k}_{{\rm MDL}} = \argmin - (n-k)m\log \frac{g(k)}{a(k)} + \frac{1}{2}k(2n-k)\log m \qquad \textrm{for } k \in \mathbb{N}: 0 \leq k < n
\end{equation}
where $g(k) = \prod_{j=k+1}^{n} l_{j}^{1/(n-k)}$ is the geometric mean of the $n-k$ smallest sample eigenvalues and $a(k)= \frac{1}{n-k} \sum_{j = k+1}^{n} l_{j}$ is their arithmetic mean. 

It is known \cite{kailath-wax} that the AIC form inconsistently estimates the number of signals, while the MDL form estimates the number of signals consistently in the classical $n$ fixed, $m \to \infty$ sense. The simplicity of the estimator, and the large sample consistency are among the primary reasons why the Kailath-Wax MDL estimator continues to be employed in practice \cite{vantrees02a}. In the two decades since the publication of the WK paper, researchers have come up with many innovative solutions for making the estimators more ``robust'' in the sense that estimators continue to work well in settings where the underlying assumptions of snapshot and noise Gaussianity and inter-snapshot independence (e.g. in the presence of multipath) can be relaxed as in the work of Zhao et al \cite{zhao86a,zhao86b}, Xu et al \cite{wong90a}, and Stoica-Cedervall \cite{stoica97a}among others \cite{fishler05a}. 

Despite its obvious practical importance, the robustness of the algorithm to model mismatch is an issue that we shall not address in this paper. Instead we aim to revisit the original problem considered by Wax and Kailath with the objective of designing an estimator that is robust to high-dimensionality and sample size constraints. We are motivated by the observation that the most important deficiency of the Wax-Kailath estimator and its successors that is yet to be satisfactorily resolved occurs in the setting where the sample size is smaller  than the number of sensors, \ie, when $m < n$, which  is increasingly the case in many state-of-the-art radar and sonar systems where the number of sensors exceeds the sample size by a factor of $10-100$ \cite{abb:private}. In this situation, the SCM is singular and the estimators  become degenerate, as seen in (\ref{eq:aic est}) and (\ref{eq:mdl est}). Practitioners often overcome this in an ad-hoc fashion by, for example, restricting $k$ in (\ref{eq:mdl est}) to integer values in the range $0 \leq k < \min(n,m)$ so that
\begin{equation}\label{eq:WK MDL mod}
\hat{k}_{\overline{{\rm MDL}}} = \argmin - (n-k)m\log \frac{g(k)}{a(k)} + \frac{1}{2}k(2n-k)\log m \qquad \textrm{for } k \in \mathbb{N}: 0 \leq k < \min(n,m)
\end{equation}
Since large sample, \ie, $m \gg n$, asymptotics \cite{Anderson63} were used to derive the estimators in \cite{kailath-wax}, there is no rigorous theoretical justification for such a reformulation even if the simulation results suggest that the WK estimators are working ``well enough.'' 

This is true for other sample eigenvalue based solutions found in the literature that exploit the sample eigenvalue order statistics \cite{fishler00a,fishler02a}, employ a Bayesian framework by imposing priors on the number of signals \cite{bansal91a}, involve solving a set of possibly high-dimensional non-linear equations \cite{wong90a},  or propose sequential hypothesis testing procedures \cite{shah94a}. The fact that these solutions are computationally more intensive  or require the practitioner to set subjective threshold levels makes them less attractive than the WK MDL solution; more importantly,  they do not address the sample starved setting in their analysis or their simulations either. 

For example, in \cite{fishler02a}, Fishler et al use simulations to illustrate the performance of their algorithm with $n=7$ sensors and a sample size $m>500$ whereas in a recent paper \cite{fishler05a}, Fishler and Poor illustrate their performance with $n=10$ sensors and $m>50$ samples. Van Trees  discusses the various techniques for estimating the number of signals in Section 7.8 of \cite{vantrees02a}; the sample starved setting where $m=O(n)$ or $m < n$ is not treated in the simulations either. 

There is however, some notable work on detecting the number of signals using short data records. Particle filter based techniques \cite{larocque02a}, have proven to be particularly useful in such short data record settings. Their disadvantage, from our perspective, is that they require the practitioner to the model the eigenvectors of the underlying population covariance matrix as well; this makes them especially sensitive to model mismatch errors that are endemic to high-dimensional settings. 

This motivates our development of a mathematically justifiable, sample eigenvalue based estimator with a computational complexity comparable to that of the modified WK estimator in (\ref{eq:WK MDL mod}) that remains robust to high-dimensionality and sample size constraints. The proposed new estimator given by:

\theorembox{
\begin{subequations}\label{eq:new estimator}
\begin{align}
t_{k} &= \left[(n-k)\dfrac{\sum_{i=k+1}^{n} l_{i}^{2}}{(\sum_{i=k+1}^{n} l_{i})^{2}}- \left(1+\dfrac{n}{m}\right) \right]n-\left(\dfrac{2}{\beta}-1\right)\dfrac{n}{m}\\[0.1in]
\hat{k}_{{\rm NEW}} &= \argmin_{k} \,\left\{\dfrac{\beta}{4} \left[\dfrac{m}{n}\right]^{2}\, t_{k}^{2} \right\} + 2(k+1) \qquad \textrm{for } k \in \mathbb{N}: 0\leq k < \min(n,m).\ 
\end{align}
\end{subequations}
Here $\beta = 1$ if ${\bf x}_{i} \in \mathbb{R}^{n}$, and $\beta = 2$ if ${\bf x}_{i} \in \mathbb{C}^{n}$.
}
In (\ref{eq:new estimator}), the $l_{i}$'s are the eigenvalues of the sample covariance matrix $\widehat{{\bf R}}$. An implicit assumption in the derivation of (\ref{eq:new estimator}) is that the number of signals is much smaller than the system size, \ie, $k \ll n$. A rather important consequence of our sample eigenvalue based detection scheme, as we shall elaborate in Section \ref{sec:consistent number of signals}, is that it just might not be possible to detect low level/closely spaced signals when there are too few samples available. 

To illustrate this effect, consider the situation where there are two uncorrelated (hence, independent) signals so that ${\bf R}_{s} = \textrm{diag}(\sigma_{{\rm S}1}^{2},\sigma_{{\rm S}2}^{2})$. In (\ref{eq:superposition problem}) let ${\bf A} = [{\bf v}_{1} {\bf v}_{2}]$, as in a sensor array processing application so that ${\bf v}_{1} \equiv {\bf v}(\theta_{1})$ and ${\bf v}_{2} \equiv {\bf v}_{2}(\theta_{2})$ encode the array manifold vectors for a source and an interferer with powers $\sigma_{{\rm S}1}^{2}$ and $\sigma_{{\rm S}2}^{2}$, located at $\theta_{1}$ and $\theta_{2}$, respectively. The covariance matrix is then given by 
\begin{equation}
{\bf R} = \sigma_{{\rm S}1}^{2} {\bf v}_{1}{\bf v}_{1}'+ \sigma_{{\rm S}2}^{2}  {\bf v}_{2}{\bf v}_{2}' + \sigma^{2} {\bf I}.
\end{equation}
In the special situation when  $\parallel\! {\bf v}_{1} \!\parallel = \parallel\! {\bf v}_{2}\! \parallel = \parallel\! {\bf v}\! \parallel$ and $\sigma_{{\rm S1}}^{2} = \sigma_{{\rm S2}}^{2} = \sigma_{{\rm S}}^{2}$, we can (in an asymptotic sense) reliably detect the presence of \textit{both signals} from the sample eigenvalues alone whenever
\begin{empheq}[
box=\setlength{\fboxrule}{1pt}\fbox]{equation}
\label{eq:array tradeoff intro}
\textrm{Asymptotic identifiability condition}: \qquad \sigma_{{\rm S}}^{2} \parallel \! {\bf v} \!\parallel^{2} \left(1-\dfrac{|\langle {\bf v}_{1},{\bf v}_{2}\rangle |}{\parallel \! {\bf v}\parallel}\right)  > \sigma^{2} \sqrt{\dfrac{n}{m}}  
\end{empheq}
If the signals are not strong enough or not spaced far enough part, then not only will proposed estimator consistently underestimate the number of signals but so will any other sample eigenvalue based detector.

The concept of the effective number of signals provides insight into the fundamental limit, due to snapshot constraints in high-dimensional settings, of reliable signal detection by eigen-inference , \ie, by using the sample eigenvalues alone. This helps identify scenarios where algorithms that exploit any structure in the eigenvectors of the signals, such as the MUSIC and the Capon-MVDR algorithms in sensor array processing \cite{vantrees02a} or particle filter based techniques \cite{larocque02a}, might be better able to tease out lower level signals from the background noise. It is worth noting that the proposed approach remain relevant in situations where the eigenvector structure has been identified. This is because eigen-inference methodologies are inherently robust to eigenvector modelling errors that occur in high-dimensional settings. Thus the practitioner may use the proposed methodologies to complement and ``robustify'' the inference provided by  algorithms that exploit the eigenvector structure.

\section{Pertinent results from random matrix theory}\label{sec:relevant rmt}

Analytically characterizing the distribution of the sample eigenvalues, as a function of the population eigenvalues, is the first step in designing a sample eigenvalue based estimator  that is robust to high-dimensionality and sample size constraints. For arbitrary covariance ${\bf R}$,  the joint density function of the eigenvalues $l_{1},\ldots,l_{n}$ of the SCM $\widehat{\bf R}$ \underline{when $m>n+1$} is shown to be given by \cite{Anderson63}
\begin{equation}\label{eq:gmm joint density arb covariance}
f(l_{1},\ldots,l_{n}) = \widetilde{Z}_{n,m}^{\beta} \sum_{i=1}^{n} l_{i}^{\beta(m-n+1)/2-1} \prod_{i<j}^{n} |l_{i}-l_{j}|^{\beta} \prod_{i=1}^{n} dl_{i} \int_{{\bf Q}} \exp\left(-\dfrac{m\beta}{2}\, \Tr\,\left({\bf R}^{-1}{\bf Q}\widehat{\bf R}{\bf Q}'\right)\right) d{\bf Q}
\end{equation}
where $l_{1}>\ldots>l_{n}>0$, $\widetilde{Z}_{n,m}^{\beta}$ is a normalization constant, and $\beta=1$ (or $2$) when $\widehat{\bf R}$ is real (resp. complex). In (\ref{eq:gmm joint density arb covariance}), ${\bf Q} \in {\bf O}(n)$ when $\beta = 1$ while ${\bf Q} \in {\bf U}(n)$ when $\beta=2$ where ${\bf O}(n)$ and ${\bf U}(n)$ are, respectively, the set of $n \times n$ orthogonal and unitary matrices with Haar measure. 

Note that the exact characterization of the joint density of the eigenvalues in (\ref{eq:gmm joint density arb covariance}) involves  a multidimensional integral over the orthogonal (or unitary) group. This makes it intractable for analysis without resorting to asymptotics. Anderson's landmark paper \cite{Anderson63} does just that by characterizing the distribution of the sample eigenvalues using large sample asymptotics. A classical result due to Anderson establishes the consistency of the sample eigenvalues in the dimensionality $n$ fixed, sample size $m \to \infty$ asymptotic regime \cite{Anderson63}. When the dimensionality is small and there are plenty of samples available, Anderson's analysis suggests that the sample eigenvalues will be (roughly) symmetrically centered around the population eigenvalues. When the dimensionality is large, and the sample size is relatively small, Anderson's prediction of sample eigenvalue consistency is in stark contrast to the asymmetric spreading of the sample eigenvalues that is observed in numerical simulations. This is illustrated in Figure \ref{fig:illustration of spiked blurring} where the $n = 20$ eigenvalues of a SCM formed from $m=20$ samples are compared with the eigenvalues of the underlying population covariance matrix.

\begin{figure}[t]
\subfigure[True Eigenvalues. ]{
\includegraphics[width=3.2in]{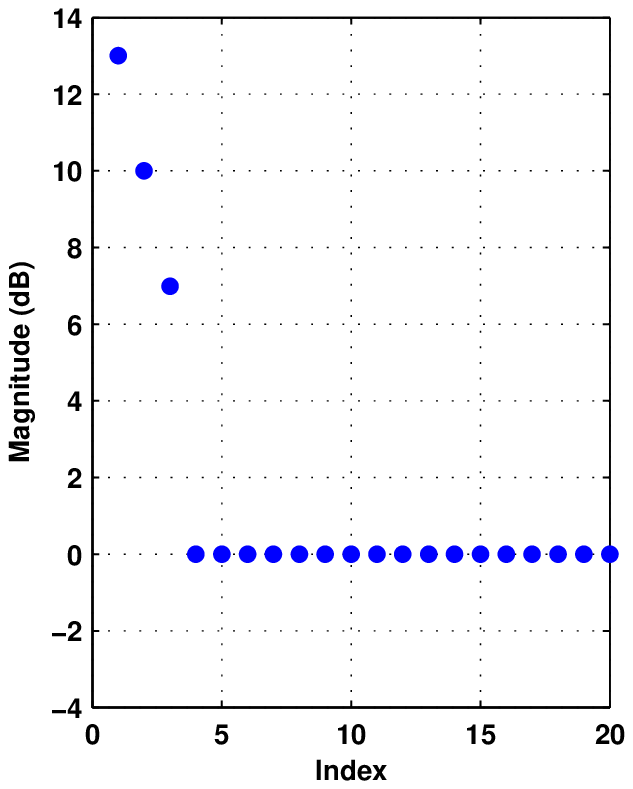}
}
\subfigure[Sample eigenvalues formed from  $20$ snapshots.]{
\includegraphics[width=3.2in]{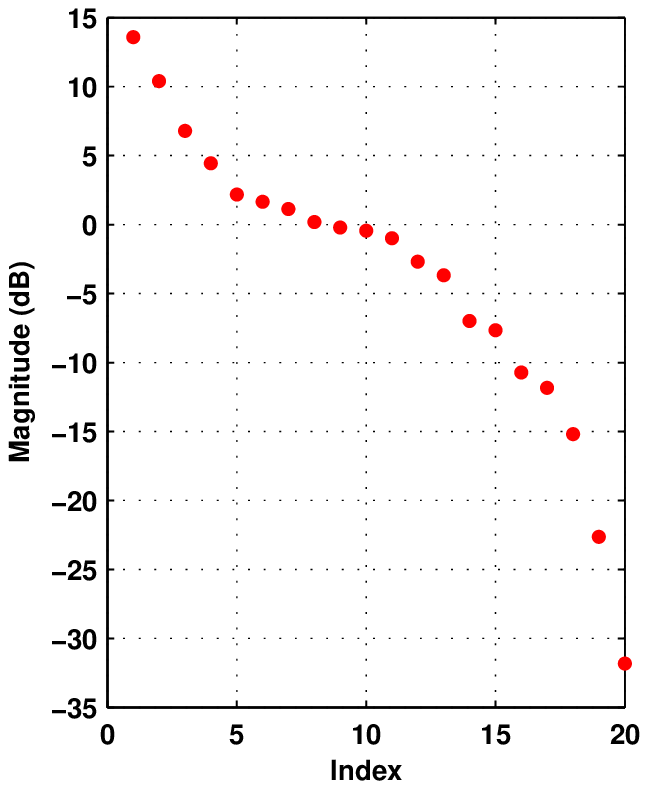}
}
\caption{Blurring of sample eigenvalues relative to the population eigenvalues when are there a finite number of snapshots.}
\label{fig:illustration of spiked blurring}
\end{figure}

The role of random matrix theory comes in because of new analytical results that are able to precisely describe the spreading of the sample eigenvalues exhibited in Figure \ref{fig:illustration of spiked blurring}. Since our new estimator explicitly exploits these analytical results, the use of our estimator in high-dimensional, sample starved settings is more mathematically justified than other sample eigenvalue based approaches found in the literature  that explicitly use Anderson's sample eigenvalue consistency results. See, for example \cite[Eq. (13a), pp. 389]{kailath-wax}, \cite[Eq (5)., pp. 2243]{fishler00a}.

We argue that this is particularly so in settings where $m<n$, where practitioners (though, not the original authors!) have often invoked the equality between the non-zero eigenvalues of the $\widehat{{\bf R}} = (1/m) {\bf X}{\bf X}'$ and the matrix $(1/m){\bf X}'{\bf X}$ to justify ad-hoc modifications to estimators that use only the non-zero eigenvalues of the SCM, as in (\ref{eq:WK MDL mod}). We contend that such an ad-hoc modification to any estimator that explicitly uses Anderson's sample eigenvalue consistency results  is mathematically unjustified because the sample eigen-spectrum blurring, which is only exacerbated in the $m<n$ regime, remains unaccounted for. 

Before summarizing the pertinent results, we note that the analytical breakthrough is a consequence of considering the large system size, relatively large sample size asymptotic regime as opposed to ``classical'' fixed system size, large sample size asymptotic regime. Mathematically speaking, the new results describe the distribution of the eigenvalues in $n,m \to \infty$ with $n/m \to c \in (0,\infty)$ asymptotic regime as opposed to the $n$ fixed, $m \to \infty$ regime \textit{\`{a} la} Anderson. We direct the reader to Johnstone's excellent survey for a discussion on these asymptotic regimes \cite[pp. 9]{imj06a} and much more.

\subsection{Eigenvalues of the signal-free SCM}
A central object in the study of large random matrices is the empirical distribution function (e.d.f.) of the eigenvalues, which for an arbitrary matrix ${\bf A}$ with $n$ real eigenvalues (counted with multiplicity), is defined as 
\begin{equation}\label{eq:edf definition}
F^{{\bf A}}(x) = \frac{\textrm{Number of eigenvalues of } {\bf A} \leq x}{n}. 
\end{equation} 
For a broad class of random matrices, the sequence of e.d.f.'s can be shown to converge in the $n \to \infty$ limit to a non-random distribution function \cite{raj:acta}. Of particular interest is the convergence of the e.d.f. of the signal-free SCM which is described next.

\begin{proposition}\label{prop:mandp density}
Let $\widehat{{\bf R}}$ denote a signal-free sample covariance matrix formed from an $n \times m$ matrix of observations with i.i.d. Gaussian samples of mean zero and variance $\lambda=\sigma^{2}$.  Then the e.d.f. $F^{{\widehat{\bf R}}}(x) \to F^{W}(x)$ almost surely for every $x$, as $m,n \to \infty$ and $c_{m} = n/m \to c$ where 
\begin{equation}\label{eq:mandp density}
dF^{W}(x) = \max\left(0,\left(1-\frac{1}{c}\right)\right)\delta(x)+\frac{\sqrt{(x-a_{-})(a_{+}-x)}}{2\pi\lambda x c} \mathbb{I}_{[a_{-},a_{+}]}(x)\, dx,
\end{equation} 
with $a_{\pm}=\lambda(1\pm\sqrt{c})^{2}$, $\mathbb{I}_{[a,b]}(x) = 1$ when $a \leq x \leq b$ and zero otherwise, and $\delta(x)$ is the Dirac delta function. 
\end{proposition}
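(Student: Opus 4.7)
This is the classical Marchenko--Pastur theorem, and the cleanest route is via the Stieltjes transform (resolvent) method, which delivers both the support endpoints $a_\pm$ and the explicit density in \eqref{eq:mandp density}, and which naturally produces the atom at $0$ when $c > 1$. Define the Stieltjes transform of the empirical spectral measure by
\begin{equation*}
s_n(z) \;=\; \frac{1}{n}\Tr\,(\widehat{\bf R} - z{\bf I})^{-1}, \qquad z \in \mathbb{C}^+.
\end{equation*}
By the Stieltjes--Perron inversion formula together with a standard vague-to-weak compactness argument, almost sure pointwise convergence of $s_n(z)$ on $\mathbb{C}^+$ to the Stieltjes transform $s(z)$ of a probability measure $\mu$ implies $F^{\widehat{\bf R}} \to \mu$ weakly, almost surely, at every continuity point of $\mu$.

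The analytical heart of the argument is a self-consistent equation for $s(z) := \lim_n \mathbb{E}[s_n(z)]$. Writing $\widehat{\bf R} = (1/m)\sum_i {\bf x}_i {\bf x}_i'$ in terms of the i.i.d.\ Gaussian columns of ${\bf X}$, I would apply the Sherman--Morrison rank-one update formula to peel off one column at a time, expressing each diagonal entry of the resolvent in terms of quadratic forms of the type $m^{-1}{\bf x}_i'(\widehat{\bf R}^{(i)} - z{\bf I})^{-1}{\bf x}_i$, where $\widehat{\bf R}^{(i)}$ is the sample covariance built from the remaining $m-1$ columns. Conditioning on $\widehat{\bf R}^{(i)}$, Gaussian isotropy plus the Hanson--Wright concentration inequality for quadratic forms ensures that each such form concentrates around $(\sigma^2/m)\Tr(\widehat{\bf R}^{(i)} - z{\bf I})^{-1}$, which differs from $c_m \sigma^2 s_n(z)$ only by a negligible rank-one resolvent perturbation. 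Passing to the limit yields the Marchenko--Pastur fixed-point equation, a quadratic in $s(z)$. Solving it on $\mathbb{C}^+$ with the correct analytic branch and applying the Stieltjes inversion $\rho(x) = \pi^{-1}\lim_{\varepsilon \downarrow 0}\Im s(x+i\varepsilon)$ then recovers the density in \eqref{eq:mandp density}: the discriminant's branch points pin down $a_\pm = \lambda(1\pm\sqrt{c})^2$, while the pole of $s$ at $z=0$, which is present only when $c>1$, contributes the point mass $(1-1/c)\delta(x)$ matching the $n-m$ deterministic zero eigenvalues of the singular $\widehat{\bf R}$.

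To upgrade convergence in expectation to almost sure convergence, I would apply the Gaussian Poincar\'e inequality to $s_n(z)$ viewed as a Lipschitz function of the i.i.d.\ Gaussian entries of ${\bf X}$, obtaining $\mathrm{Var}(s_n(z)) = O(n^{-2})$ uniformly on compact subsets of $\mathbb{C}^+$. Borel--Cantelli on a countable dense subset of $\mathbb{C}^+$, combined with the analyticity of $s_n$ and a Vitali/Montel argument, then upgrades convergence to almost sure pointwise convergence on all of $\mathbb{C}^+$.

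The main obstacle is the rigorous derivation of the fixed-point equation: the quadratic form $m^{-1}{\bf x}_i'(\widehat{\bf R}^{(i)} - z{\bf I})^{-1}{\bf x}_i$ has to be decoupled from $\widehat{\bf R}^{(i)}$ (handled by conditioning on the remaining columns) while the various $o(1)$ errors coming from Hanson--Wright concentration and from the rank-one resolvent perturbation are controlled uniformly on compact subsets of $\mathbb{C}^+$. Once the fixed-point equation is established, solving the quadratic, performing Stieltjes inversion, and reading off the Dirac mass at zero are all mechanical.
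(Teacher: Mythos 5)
Your outline is a correct sketch of the standard resolvent proof of the Mar\v{c}enko--Pastur theorem, but it is worth noting that the paper does not prove this proposition at all: its ``proof'' is a pointer to the literature (Mar\v{c}enko--Pastur, Wachter, Jonsson, Silverstein et al.), and the Stieltjes-transform argument you describe --- leave-one-out rank-one updates, concentration of the quadratic forms $m^{-1}{\bf x}_i'(\widehat{\bf R}^{(i)}-z{\bf I})^{-1}{\bf x}_i$ around a trace, the self-consistent quadratic for $s(z)$, Stieltjes inversion to read off the density and the endpoints $a_\pm=\lambda(1\pm\sqrt{c})^2$, and variance bounds plus Borel--Cantelli and Vitali to get almost sure convergence --- is precisely the method of the cited Silverstein references. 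So you are supplying the proof the paper outsources, by essentially the same route the cited sources take; the alternative classical routes (the original Mar\v{c}enko--Pastur PDE argument, or the moment/combinatorial method of Jonsson, which the paper implicitly leans on elsewhere when it uses the moment formula $M^W_k$) would also work but are not what you chose. One small point to tighten: the proposition asserts convergence ``for every $x$,'' whereas Stieltjes inversion gives weak convergence, i.e.\ convergence at continuity points of $F^W$; when $c>1$ the limit has an atom at $0$, so $x=0$ is not a continuity point, and you should close that gap by the deterministic observation you already make in passing --- $\widehat{\bf R}$ has exactly $\max(0,n-m)$ zero eigenvalues, so $F^{\widehat{\bf R}}(0)=\max(0,1-1/c_m)\to\max(0,1-1/c)=F^W(0)$ surely, which combined with weak convergence gives convergence at every $x$.
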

\begin{proof}
This result was proved in \cite{marcenko67a,wachter78a} in very general settings. Other proofs include \cite{jonsson82a,silverstein95a,silverstein95b}. The probability density in (\ref{eq:mandp density}) is often referred to as the Mar\v{c}enko-Pastur density.
\end{proof}

Figure \ref{fig:wishart mandp} plots the Mar\v{c}enko-Pastur density in (\ref{eq:mandp density}) for $\lambda = 1$ and different values of $c = \lim n/m$. Note that as $c \to 0$, the eigenvalues are increasingly clustered around $x = 1$ but for modest values of $c$, the spreading is quite significant.

\begin{figure}
\centering
\includegraphics[width=5.1in]{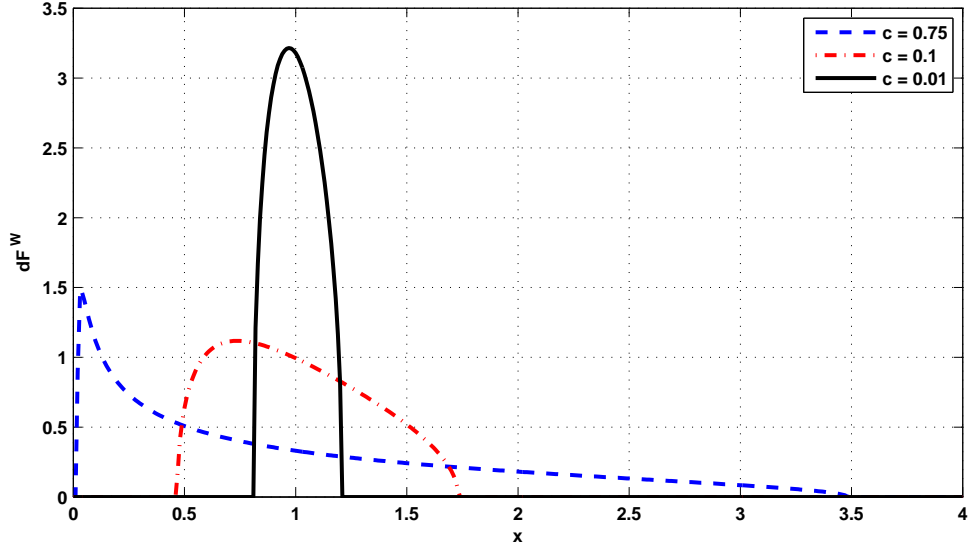}
\caption{The Mar\v{c}enko-Pastur density, given (\ref{eq:mandp density}), for the eigenvalues of the signal-free sample covariance matrix with noise variance $1$ and   $c = \lim n/m$.}
\label{fig:wishart mandp} 
\end{figure}

The almost sure convergence of the e.d.f. of the signal-free SCM implies that the moments of the eigenvalues converge almost surely, so that
\begin{equation}\label{eq:convergence of wishart moments}
\frac{1}{n} \sum_{i=1}^{n} l_{i}^{k} \overset{a.s.}{\longrightarrow} \int x^{k} dF^{W}(x) =: M^{W}_{k}.
\end{equation}
The moments of the Mar\v{c}enko-Pastur density are given by \cite{jonsson82a,dumitriu03a}
\begin{equation}
M^{W}_{k} = \lambda^{k} \sum_{j=0}^{k-1} c^{j} \frac{1}{j+1} {k \choose j} {k-1 \choose j}.
\end{equation}
For finite $n$ and $m$, the sample moments, \ie, $\frac{1}{n} \sum_{i=1}^{n} l_{i}^{k}$ will fluctuate about these limiting values. The precise nature of the fluctuations is described next.

\begin{proposition}\label{prop:mandp fluctuations}
If $\widehat{{\bf R}}$ satisfies the hypotheses of Proposition \ref{prop:mandp density} for some $\lambda$ then as $m,n \to \infty$ and $c_{m} = n/m \to c \in (0,\infty)$, then 
\begin{equation}\label{eq:subs values}
\smallbox{
n\,\left(\begin{bmatrix}
\frac{1}{n}\sum_{i=1}^{n} l_{i}  \\
\\
\frac{1}{n}\sum_{i=1}^{n} l_{i}^{2}  
\end{bmatrix}-
\begin{bmatrix}
\lambda \\
\\
\lambda^{2}(1+c)
\end{bmatrix}
\right)
\overset{\mathcal{D}}{\longrightarrow} \mathcal{N}\left(\underbrace{\begin{bmatrix} 0 \\(\frac{2}{\beta}-1)\lambda^{2}c\end{bmatrix}}_{={\bm{\mu}_{Q}}},\frac{2}{\beta}\underbrace{\begin{bmatrix}
\lambda^{2}c & 2\lambda^{3}c\,(c+1) \\
2\lambda^{3}c\,(c+1)  & 2\lambda^{4}c\,(2c^2+5c+2)\\
\end{bmatrix}}_{= {\bf Q}}\right)
}
\end{equation}
where the convergence is in distribution.
\end{proposition}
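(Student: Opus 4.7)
The plan is to recognize both coordinates as linear eigenvalue statistics of the Wishart matrix $\widehat{\bf R}$ and then invoke a joint central limit theorem for traces of powers. Writing $S_{1} = \Tr\widehat{\bf R} = \sum_{i} l_{i}$ and $S_{2} = \Tr\widehat{\bf R}^{2} = \sum_{i} l_{i}^{2}$, the random vector on the left-hand side of the proposition is $(S_{1}-n\lambda,\;S_{2}-n\lambda^{2}(1+c))^{\top}$, up to an $o(1)$ deterministic drift from replacing $c_{m}$ by $c$. The first task is to pin down the exact means. Since $S_{1} = m^{-1}\sum_{i,j}|X_{ij}|^{2}$ is a sum of $nm$ i.i.d.\ squared Gaussians, $\mathbb{E}\,S_{1}=n\lambda$, so the first coordinate is already centred. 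For $S_{2}=m^{-2}\sum_{i,k,j,l}X_{ij}X_{kj}X_{kl}X_{il}$ I would apply Isserlis/Wick: three pair-partitions of the four factors contribute when $\beta=1$ and two when $\beta=2$, and summing the resulting products of Kronecker deltas over $i,k\in[n]$, $j,l\in[m]$ yields
\[
\mathbb{E}\,S_{2} \;=\; n\lambda^{2}(1+c_{m}) \;+\; (\tfrac{2}{\beta}-1)\,\lambda^{2}c_{m},
\]
so after subtracting $n\lambda^{2}(1+c_{m})$ the deterministic residual converges to the $(\tfrac{2}{\beta}-1)\lambda^{2}c$ entry of ${\bm{\mu}_{Q}}$.

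For the joint Gaussian fluctuations, by the Cram\'er--Wold device it suffices to show that every linear combination $\alpha S_{1}+\gamma S_{2}$ is asymptotically Gaussian, and this is precisely the Wishart CLT for the polynomial test function $f(x)=\alpha x+\gamma x^{2}$: proved by Jonsson \cite{jonsson82a} for $\beta=1$ and extended to $\beta=2$ in \cite{BaiS04}. The limiting variance of $\Tr f(\widehat{\bf R})$ given in those references is an explicit bilinear form in $f$, so polarizing in $(\alpha,\gamma)$ reads off the full $2\times 2$ covariance matrix directly. The individual entries can equivalently be recomputed by a direct Wick/method-of-moments calculation of $\mathrm{Cov}(S_{p},S_{q})$ for $p,q\in\{1,2\}$: at leading order only the ``non-crossing'' (planar) pair-partitions on the corresponding trace cycle survive, and their enumeration gives $\mathrm{Var}\,S_{1}\to(2/\beta)\lambda^{2}c$, $\mathrm{Cov}(S_{1},S_{2})\to(2/\beta)\cdot 2\lambda^{3}c(c+1)$, and $\mathrm{Var}\,S_{2}\to(2/\beta)\cdot 2\lambda^{4}c(2c^{2}+5c+2)$, matching $\tfrac{2}{\beta}{\bf Q}$.

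The main obstacle is establishing asymptotic normality of $S_{2}$ itself, because $S_{2}$ is a quartic form in the Gaussian entries of ${\bf X}$ rather than a sum of independent variables, so a one-dimensional CLT does not apply directly. The standard route is the method of moments: one controls every higher centred moment of $\Tr\widehat{\bf R}^{p}$ by enumerating Wick pairings on a trace cycle of length $2p$ and showing that only the genus-zero (planar) pairings contribute at leading order, thereby reproducing the Gaussian moment hierarchy. This combinatorial accounting is exactly what is carried out in \cite{jonsson82a,dumitriu05a}, so the proof ultimately reduces to quoting those CLTs with $f(x)=x$ and $f(x)=x^{2}$ and plugging in the mean and covariance computed above.
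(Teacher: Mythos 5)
Your proposal is correct and takes essentially the same route as the paper, whose proof of this proposition consists entirely of citing the CLTs for linear spectral statistics in \cite{johansson_clt_herm,jonsson82a,BaiS04,dumitriu05a}; you simply make explicit the supporting details (Cram\'er--Wold reduction, the Wick computation giving the $(\tfrac{2}{\beta}-1)\lambda^{2}c$ mean shift, and the planar-pairing evaluation of the covariance entries) that those references supply. The only caveat, which the paper's statement shares and does not resolve, is that replacing the finite-$n$ centering $\lambda^{2}(1+c_{m})$ by $\lambda^{2}(1+c)$ introduces a drift $n\lambda^{2}(c_{m}-c)$ that vanishes only under an additional rate assumption such as $n|c_{m}-c|\to 0$, not merely $c_{m}\to c$.
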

\begin{proof}
This result appears in \cite{johansson_clt_herm,jonsson82a} for the real case and in \cite{BaiS04} for the real and complex cases. The result for general $\beta$ appears in Dumitriu and Edelman \cite{dumitriu05a}. 
\end{proof}

We now use the result in Proposition \ref{prop:mandp fluctuations} to develop a test statistic $q_{n}$ whose distribution is independent of the unknown noise variance $\lambda$. The distributional properties of this test statistic are described next.

\begin{proposition}\label{prop:main prop}
Assume $\widehat{{\bf R}}$ satisfies the hypotheses of Proposition \ref{prop:mandp density} for some $\lambda$. Consider the statistic
\[
q_{n} = \dfrac{\frac{1}{n}\sum_{i=1}^{n} l_{i}^2}{\left(\frac{1}{n}\sum_{i=1}^{n} l_{i}\right)^{2}}.
\]
Then as $m,n \to \infty$ and $c_{m} = n/m \to c \in (0,\infty)$, 
\begin{equation}\label{eq:main dist results}
n\left[q_{n}-(1+c)\right] \overset{\mathcal{D}}{\longrightarrow} \mathcal{N}\left(\left(\dfrac{2}{\beta}-1\right)c,\dfrac{4}{\beta}c^{2}\right) 
\end{equation}
where the convergence is in distribution.
\end{proposition}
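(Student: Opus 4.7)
The plan is a straightforward application of the multivariate delta method to the smooth function
\[
f(x,y) \;=\; \dfrac{y}{x^{2}},
\]
evaluated at the empirical moment vector $(\widehat M_1,\widehat M_2) = \bigl(\tfrac1n\sum l_i,\tfrac1n\sum l_i^{2}\bigr)$, since $q_n = f(\widehat M_1,\widehat M_2)$. By Proposition~\ref{prop:mandp fluctuations}, the centered vector $n\bigl((\widehat M_1,\widehat M_2)-(\lambda,\lambda^{2}(1+c))\bigr)$ converges in distribution to a Gaussian with mean $\bm{\mu}_Q$ and covariance $(2/\beta)\mathbf Q$. Crucially for what follows, the fluctuations are of order $1/n$ rather than the usual $1/\sqrt{n}$, which is what makes the first-order expansion sufficient.

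First I would linearize $f$ about the point $(x_0,y_0)=(\lambda,\lambda^{2}(1+c))$, computing
\[
\partial_x f(x_0,y_0) \;=\; -\dfrac{2y_0}{x_0^{3}} \;=\; -\dfrac{2(1+c)}{\lambda},
\qquad
\partial_y f(x_0,y_0) \;=\; \dfrac{1}{x_0^{2}} \;=\; \dfrac{1}{\lambda^{2}},
\]
so that $f(x_0,y_0)=1+c$ and the Taylor expansion gives
\[
q_n - (1+c) \;=\; -\dfrac{2(1+c)}{\lambda}\,(\widehat M_1-\lambda) \;+\; \dfrac{1}{\lambda^{2}}\,(\widehat M_2-\lambda^{2}(1+c)) \;+\; R_n,
\]
where $R_n$ is a quadratic remainder in the deviations $(\widehat M_1-\lambda,\widehat M_2-\lambda^{2}(1+c))$. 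Next I would argue that $nR_n\to0$ in probability: since Proposition~\ref{prop:mandp fluctuations} shows the deviations are $O_p(1/n)$, we have $nR_n = O_p(1/n)$, which is the key place where the non-standard $1/n$-scale CLT for linear eigenvalue statistics of Wishart matrices pays off (a standard $1/\sqrt n$-scale CLT would leave an $O_p(1)$ bias here).

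By Slutsky's theorem, $n[q_n-(1+c)]$ then has the same limit as the linearization, i.e.\ $\nabla f \cdot \mathbf{Z}$ where $\mathbf Z\sim\mathcal N(\bm\mu_Q,(2/\beta)\mathbf Q)$. The limiting mean is
\[
\nabla f\cdot\bm\mu_Q \;=\; -\dfrac{2(1+c)}{\lambda}\cdot 0 \;+\; \dfrac{1}{\lambda^{2}}\cdot\Bigl(\tfrac{2}{\beta}-1\Bigr)\lambda^{2}c \;=\; \Bigl(\tfrac{2}{\beta}-1\Bigr)c,
\]
as claimed. The limiting variance is $(2/\beta)\,\nabla f^{\!\top}\mathbf Q\,\nabla f$; expanding and collecting terms yields
\[
\nabla f^{\!\top}\mathbf Q\,\nabla f \;=\; 4(1+c)^{2}c \;-\; 8(1+c)^{2}c \;+\; 2c(2c^{2}+5c+2) \;=\; 2c^{2},
\]
so the variance is $(4/\beta)c^{2}$, matching (\ref{eq:main dist results}). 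A pleasant side observation is that all dependence on $\lambda$ cancels in both the mean and the variance, confirming that $q_n$ is indeed a pivot for the unknown noise level.

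The only real obstacle is the justification that the quadratic remainder $nR_n$ is negligible; this is immediate here only because Proposition~\ref{prop:mandp fluctuations} provides $1/n$-scale (rather than $1/\sqrt n$-scale) fluctuations. Everything else is a mechanical delta-method computation, followed by the two short algebraic simplifications above.
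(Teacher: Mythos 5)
Your proposal is correct and follows essentially the same route as the paper: a first-order delta-method expansion of $g(x,y)=y/x^{2}$ about $(\lambda,\lambda^{2}(1+c))$ using the joint CLT of Proposition~\ref{prop:mandp fluctuations}, with the mean $\bm{\mu}_{Q}^{T}\bm{\nabla}g$ and variance $\frac{2}{\beta}\bm{\nabla}g^{T}{\bf Q}\bm{\nabla}g$ reducing to $(\frac{2}{\beta}-1)c$ and $\frac{4}{\beta}c^{2}$. You additionally spell out the algebra and the negligibility of the quadratic remainder at the $n$-scaling, which the paper leaves implicit; your computations check out.
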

\begin{proof}
Define the function $g(x,y) = y/x^2$. Its gradient vector $\bm{\nabla}\!g(x,y)$ is given by
\begin{equation}\label{eq:delta method gradient}
\bm{\nabla}\!g(x,y) := [\partial_{x} g(x,y) \,\,\,\:\:~~\partial_{y} g(x,y)]^{T} = [-2y/x^{3} \,\,\,\:\:~~~1/x^{2}]^{T}.
\end{equation}
The statistic $q_{n}$ can be written in terms of $g(x,y)$ as simply $q_{n}=g(\frac{1}{n}\sum_{i=1}^{n} l_{i},\frac{1}{n}\sum_{i=1}^{n} l_{i}^{2})$. The limiting distribution of $q_{n}$ can be deduced from the distributional properties of $\frac{1}{n}\sum_{i=1}^{n} l_{i}$ and $\frac{1}{n}\sum_{i=1}^{n} l_{i}^{2}$ established in Proposition \ref{prop:mandp fluctuations}. Specifically, by an application of the delta method \cite{casella90a}, we obtain that as $n,m \to\infty$ with $n/m \to c \in (0,\infty)$, 
\[
n\left[q_{n}-g(\lambda,\lambda^{2}(1+c))\right] \overset{\mathcal{D}}{\longrightarrow} \mathcal{N}(\mu_{q},\sigma^{2}_{q})
\] 
where the mean $\mu_{q}$ and the variance $\sigma^{2}_{q}$ are given by
\begin{subequations}
\begin{align}
\mu_{q} &= \bm{\mu}_{Q}^{T}  \bm{\nabla}\!g\left(\lambda,\lambda^{2}(1+c)\right),\label{eq:delta method mean}\\ 
\sigma^{2}_{q} &= \dfrac{2}{\beta}\bm{\nabla}\! g\left(\lambda,\lambda^{2}(1+c)\right)^{T}{\bf Q}\bm{\nabla}\!g\left(\lambda,\lambda^{2}(1+c)\right)
 \label{eq:delta method variance}
\end{align}
\end{subequations}
Substituting, the expressions for $\bm{\mu}_{Q}$ and ${\bf Q}$ given in (\ref{eq:subs values}), in (\ref{eq:delta method mean}) and (\ref{eq:delta method variance}) gives us the required expressions for the mean and the variance of the normal distribution on the right hand side of (\ref{eq:main dist results})
\end{proof}

\subsection{Eigenvalues of the signal bearing SCM}
When there are $k$ signals present then, in the $n \to \infty$ limit, where $k$ is kept fixed, the \textit{limiting} e.d.f. of $\widehat{{\bf R}}$ will still be given by Proposition \ref{prop:mandp density}. This is because the e.d.f., defined as in (\ref{eq:edf definition}),  weights the contribution of every eigenvalue equally so that effect of the $k/n$ fraction of ``signal'' eigenvalues vanishes in the $n\to \infty$ limit. 

Note, however, that in the signal-free case, \ie,  when $k=0$, Proposition \ref{prop:mandp density} and the result in \cite{yinbai88a} establish the almost sure convergence of the largest eigenvalue of the SCM to $\lambda(1+\sqrt{c})^{2}$. In the signal bearing case, a so-called phase transition phenomenon is observed, in that the largest eigenvalue will converge to a limit different from that in the signal-free case only if the ``signal'' eigenvalues are above a certain threshold. This is described next.

\begin{proposition}\label{prop:spiked convergence}
Let $\widehat{{\bf R}}$ denote a sample covariance matrix formed from an $n \times m$ matrix of Gaussian observations whose columns are independent of each other and identically distributed with mean ${\bf 0}$ and covariance ${\bf R}$. Denote the eigenvalues of ${\bf R}$ by $\lambda_{1} \geq \lambda_{2} > \ldots \geq \lambda_{k} > \lambda_{k+1} = \ldots \lambda_{n} = \lambda$. Let $l_{j}$ denote the $j$-th largest eigenvalue of $\widehat{{\bf R}}$. Then as $n,m \to \infty$ with $c_{m} = n/m \to c \in (0,\infty)$,
\begin{equation}
l_{j} \to 
\begin{cases}
\lambda_{j} \left( 1+ \dfrac{\lambda\,c}{\lambda_{j}-\lambda}\right) & {\rm if } \qquad \lambda_{j} > \lambda\,(1+\sqrt{c})\\
& \\
\lambda\,(1+\sqrt{c})^{2} & {\rm if } \qquad \lambda_{j} \leq \lambda(1+\sqrt{c})\\
\end{cases}
\end{equation}
for $j =1, \ldots, k$ and the convergence is almost surely.
\end{proposition}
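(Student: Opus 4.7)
\textbf{Proof plan for Proposition \ref{prop:spiked convergence}.} The plan is to follow the Schur-complement/secular-equation strategy used by Baik--Silverstein \cite{BaikS06} and Paul \cite{Paul05a}. Because each column of the data matrix is Gaussian, we can write the data as ${\bf X} = {\bf R}^{1/2} {\bf Z}$ with ${\bf Z}$ an $n\times m$ matrix of i.i.d.\ standard (real or complex) Gaussian entries; rotational invariance of $\bf Z$ then lets me diagonalize and assume ${\bf R}=\mathrm{diag}(\lambda_1,\dots,\lambda_k,\lambda,\dots,\lambda)$ without loss of generality. Partition ${\bf Z}=\bigl[\begin{smallmatrix}{\bf Z}_1\\ {\bf Z}_2\end{smallmatrix}\bigr]$ with ${\bf Z}_1$ of size $k\times m$ and ${\bf Z}_2$ of size $(n-k)\times m$, so that $\widehat{\bf R}$ becomes a block matrix whose $(2,2)$-block is $\lambda {\bf S}_{22}$ with ${\bf S}_{22}=\tfrac{1}{m}{\bf Z}_2{\bf Z}_2'$. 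By Proposition \ref{prop:mandp density}, the spectrum of $\lambda {\bf S}_{22}$ is asymptotically confined to $[\lambda(1-\sqrt{c})^2_+,\ \lambda(1+\sqrt{c})^2]$.

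For any $l$ strictly above the top edge $a_+=\lambda(1+\sqrt{c})^2$, the block $(lI_{n-k}-\lambda{\bf S}_{22})$ is invertible, so the block-determinant formula reduces $\det(lI_n-\widehat{\bf R})=0$ to
\[
\det\!\bigl(lI_k-{\bm\Lambda}_1^{1/2}\,T_m(l)\,{\bm\Lambda}_1^{1/2}\bigr)=0,
\qquad
T_m(l):=\tfrac{1}{m}{\bf Z}_1\bigl(I_m+\tfrac{\lambda}{m}{\bf Z}_2'(lI_{n-k}-\lambda{\bf S}_{22})^{-1}{\bf Z}_2\bigr){\bf Z}_1'.
\]
The push-through identity collapses the inner bracket to $l(lI_m-{\bf H})^{-1}$ with ${\bf H}=\tfrac{\lambda}{m}{\bf Z}_2'{\bf Z}_2$, giving $T_m(l)=\tfrac{l}{m}{\bf Z}_1(lI_m-{\bf H})^{-1}{\bf Z}_1'$.

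Now the key probabilistic ingredients. Since ${\bf Z}_1$ is independent of ${\bf H}$ with i.i.d.\ entries and $k$ is fixed, a standard quadratic-form concentration (e.g., the Bai--Silverstein trace lemma) implies that almost surely
\[
\tfrac{1}{m}{\bf Z}_1(lI_m-{\bf H})^{-1}{\bf Z}_1'-\tfrac{1}{m}\mathrm{tr}\bigl((lI_m-{\bf H})^{-1}\bigr)\,I_k\ \longrightarrow\ 0,
\]
uniformly on compact subsets of the complement of the limiting spectrum. Because ${\bf Z}_2'{\bf Z}_2$ and ${\bf Z}_2{\bf Z}_2'$ share non-zero eigenvalues, Proposition \ref{prop:mandp density} identifies $\tfrac{1}{m}\mathrm{tr}((lI_m-{\bf H})^{-1})\to\phi(l):=c\int(l-\lambda x)^{-1}dF^{W,1}(x)+(1-c)/l$, where $F^{W,1}$ is the Mar\v{c}enko--Pastur law with unit variance. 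Thus the limiting secular equation for an outlier sample eigenvalue $l$ is simply $l\,\phi(l)\,\lambda_j=1$ for some $j\le k$.

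The last step is to solve this scalar equation. A short computation with the MP Stieltjes transform (or direct contour integration) shows that $l\phi(l)$ is a strictly decreasing bijection from $(a_+,\infty)$ onto $(0,1/(\lambda(1+\sqrt c)))$, and inverting it at the value $1/\lambda_j$ gives exactly
\[
l=\lambda_j\Bigl(1+\tfrac{\lambda c}{\lambda_j-\lambda}\Bigr),
\]
provided $\lambda_j>\lambda(1+\sqrt c)$, in which case this value lies above $a_+$; otherwise no solution exists in $(a_+,\infty)$. Converting these analytic facts into a.s.\ convergence of the ordered eigenvalues requires two more ingredients that I view as the main obstacles: (i) an \emph{exact separation} argument (à la Bai--Silverstein) showing that exactly the $\lambda_j$'s above threshold produce sample eigenvalues above $a_+$, carried out by counting eigenvalues via the argument principle applied to the limiting secular determinant; and (ii) the ``sticking'' of subthreshold sample eigenvalues to $a_+$, obtained by combining the signal-free edge result of Yin--Bai--Krishnaiah \cite{yinbai88a} with Weyl interlacing applied to the rank-$k$ perturbation $\widehat{\bf R}-\lambda{\bf S}=(R^{1/2}-\lambda^{1/2}I)\tfrac{1}{m}{\bf Z}{\bf Z}'(R^{1/2}+\lambda^{1/2}I)+\text{sym.}$ to squeeze $l_j$ between the signal-free $l_j$ (which tends to $a_+$) from below and a controlled upper bound from above.
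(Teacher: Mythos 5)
The paper does not actually prove Proposition \ref{prop:spiked convergence}: its ``proof'' is a pointer to Baik--Silverstein \cite{BaikS06}, Paul \cite{Paul05a} and Baik et al.\ \cite{BaikBP04}, so the comparison is really with those references. Your sketch is a faithful reconstruction of their Schur-complement/secular-equation argument, and the skeleton is sound: the block reduction, the push-through identity giving $T_m(l)=\tfrac{l}{m}{\bf Z}_1(lI_m-{\bf H})^{-1}{\bf Z}_1'$, the trace-lemma concentration, and the identification of the outlier as the root of a scalar equation are all correct. Two small points. First, your own algebra yields $\det\bigl(lI_k - l\phi(l)\bm{\Lambda}_1\bigr)=0$, i.e.\ $\phi(l)\lambda_j=1$, not $l\,\phi(l)\lambda_j=1$ as written; the function to invert at $1/\lambda_j$ is $\phi$ itself, whose range on $(a_+,\infty)$ is indeed $\bigl(0,1/(\lambda(1+\sqrt c))\bigr)$ and whose inverse at $1/\lambda_j$ is $\lambda_j\bigl(1+\lambda c/(\lambda_j-\lambda)\bigr)$, so the final formula survives the slip. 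Second, the perturbation $\widehat{\bf R}-\lambda{\bf S}$ has rank at most $2k$, not $k$, so Weyl's inequalities alone only control $l_j$ for $j>2k$; for $p<j\le 2k$ the ``sticking to $a_+$'' must come from the exact-separation count you list as obstacle (i), via the argument-principle count of zeros of the secular determinant above $a_+ + \epsilon$. You correctly identify (i) and (ii) as the substantive content of the cited proofs --- they are where essentially all of the work in \cite{BaikS06} and \cite{Paul05a} lies --- so as a plan this is complete and considerably more informative than the paper's citation, but, as you acknowledge, it is a plan rather than a proof.
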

\begin{proof}
This result appears in \cite{BaikS06} for very general settings. A matrix theoretic proof for the real valued SCM case may be found in \cite{Paul05a} while a determinental proof for the complex case may be found in \cite{BaikBP04}. A heuristic derivation that relies on an interacting particle system interpretation of the sample eigenvalues appears in \cite{raj:thesis}.
\end{proof}

For ``signal'' eigenvalues above the threshold described in Proposition \ref{prop:spiked convergence}, the fluctuations about the asymptotic limit are described next.

\begin{proposition}\label{prop:spiked fluctuations}
Assume that $\widehat{{\bf R}}$ and ${\bf R}$ satisfy the hypotheses of Proposition \ref{prop:spiked convergence}. If $\lambda_{j}> \lambda(1+\sqrt{c})$ has multiplicity $1$ and  if $\sqrt{m}|c-n/m| \to 0$ then 
\begin{equation}\label{eq:signal fluctuations variance}
\sqrt{n}\left[l_{j} - \lambda_{j} \left( 1+ \dfrac{\lambda\,c}{\lambda_{j}-\lambda}\right)\right] \overset{\mathcal{D}}{\longrightarrow} \mathcal{N}\left(0,\dfrac{2}{\beta}\lambda_{j}^{2} \left(1- \dfrac{c}{(\lambda_{j}-\lambda)^{2}} \right)\right)
\end{equation}
where the convergence in distribution is almost surely.
\end{proposition}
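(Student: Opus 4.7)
The plan is to follow the secular-equation approach of Paul \cite{Paul05a} (which also underlies the determinantal analysis of \cite{BaikBP04} and \cite{BaikS06}). Write $\mathbf{x}_i = \mathbf{R}^{1/2}\mathbf{y}_i$ with $\mathbf{y}_i \sim \mathcal{N}_n({\bf 0},{\bf I})$ and work in the eigenbasis of ${\bf R}$, so that $\widehat{\bf R} = {\bf R}^{1/2} {\bf S} {\bf R}^{1/2}$ with ${\bf S} = (1/m) {\bf Y}{\bf Y}'$. Block $\mathbf{S}$ into $\mathbf{S}_{11}$ (size $k\times k$, the "signal" block), $\mathbf{S}_{22}$ (size $(n-k)\times(n-k)$, the "noise" block) and off-diagonal $\mathbf{S}_{12},\mathbf{S}_{21}$, and set $\boldsymbol{\Lambda}_k = \mathrm{diag}(\lambda_1,\ldots,\lambda_k)$. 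A Schur complement argument applied to $\det(\widehat{\bf R} - l\,{\bf I}) = 0$ reduces the eigenvalue equation for any $l>\lambda(1+\sqrt{c})^2$ to the $k$-dimensional secular equation
\begin{equation}\label{eq:secular}
\det\Bigl[\,\mathbf{I}_k - \boldsymbol{\Lambda}_k^{1/2}\,\mathbf{K}_n(l)\,\boldsymbol{\Lambda}_k^{1/2}\,\Bigr] = 0,
\qquad
\mathbf{K}_n(l) := \frac{1}{l}\mathbf{S}_{11} + \frac{\lambda}{l}\,\mathbf{S}_{12}\,(l\mathbf{I} - \lambda\mathbf{S}_{22})^{-1}\mathbf{S}_{21}.
\end{equation}

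Next I would establish the deterministic limit and identify the fluctuation scale. Since ${\bf S}_{11}\to \mathbf{I}_k$ a.s.\ and ${\bf S}_{12}$ is independent of $\mathbf{S}_{22}$ with i.i.d.\ Gaussian entries of variance $1/m$, the standard concentration of quadratic forms gives, for each fixed $l$,
\begin{equation*}
\mathbf{S}_{12}\,(l\mathbf{I} - \lambda\mathbf{S}_{22})^{-1}\mathbf{S}_{21} = \frac{1}{m}\,\mathrm{tr}(l\mathbf{I} - \lambda\mathbf{S}_{22})^{-1}\,\mathbf{I}_k + o_P\!\bigl(n^{-1/2}\bigr)\cdot\mathbf{I}_k,
\end{equation*}
so $\mathbf{K}_n(l)$ is asymptotically scalar. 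Letting $\underline{m}(l)$ be the companion Stieltjes transform of the Mar\v{c}enko--Pastur law, one checks that the limiting secular equation $\lambda_j \cdot \bar K(l) = 1$ is exactly $l = \lambda_j(1 + \lambda c/(\lambda_j-\lambda))$, recovering Proposition \ref{prop:spiked convergence}. For the CLT one isolates, by the assumed simplicity of $\lambda_j$, a single scalar equation of the form $h_n(l_j) = 0$ where $h_n(l) = 1 - \lambda_j\,[\mathbf{K}_n(l)]_{jj}$, expand $h_n(l_j)=0$ around the deterministic limit $\rho_j := \lambda_j(1+\lambda c/(\lambda_j-\lambda))$, and apply the delta method:
\begin{equation*}
\sqrt{n}\,(l_j - \rho_j) = -\,\frac{\sqrt{n}\,\bigl(h_n(\rho_j) - \mathbb{E}h_n(\rho_j)\bigr)}{h_n'(\rho_j)} + o_P(1).
\end{equation*}
The numerator is $\sqrt{n}$ times a centered quadratic form in the Gaussian vector $\mathbf{s}_{12}^{(j)}$ against the resolvent $(\rho_j \mathbf{I} - \lambda \mathbf{S}_{22})^{-1}$, plus the diagonal fluctuation $\sqrt{n}([\mathbf{S}_{11}]_{jj} - 1)/\rho_j$. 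Both contributions are asymptotically Gaussian (the first by the Gaussian quadratic-form CLT conditional on $\mathbf{S}_{22}$, using that $(1/n)\mathrm{tr}(\rho_j \mathbf{I} - \lambda \mathbf{S}_{22})^{-2}$ converges a.s.\ to $\int (\rho_j - \lambda x)^{-2} dF^W(x)$), and they are asymptotically independent. Computing the resulting variance and simplifying via the identity $\int (\rho_j - \lambda x)^{-1} dF^W(x) = (\lambda_j - \lambda)/(\lambda_j\lambda)$ and its derivative collapses everything to $\tfrac{2}{\beta}\lambda_j^2(1 - c/(\lambda_j-\lambda)^2)$, as claimed.

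The principal obstacles are twofold. First, converting the pointwise-in-$l$ concentration of $\mathbf{K}_n(l)$ into a statement about the random root $l_j$ requires a uniform resolvent bound on an interval containing $\rho_j$ together with a rigidity-type localization of $l_j$ near $\rho_j$; this is standard but delicate, and is where the hypothesis $\sqrt{m}\,|c - n/m| \to 0$ enters, since a mismatch between $n/m$ and $c$ at that rate would contribute a bias of order $\sqrt{n}$ through the Stieltjes transform's derivative. Second, the variance calculation for the limiting Gaussian is where the detailed structure of the MP resolvent, and the factor $2/\beta$ distinguishing real and complex cases, has to be extracted — in the complex ($\beta=2$) case the quadratic-form CLT has variance equal to the trace of the squared resolvent, while in the real ($\beta=1$) case one picks up an additional symmetry contribution. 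Everything else is routine perturbation analysis around the deterministic secular equation.
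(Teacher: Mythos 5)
The paper does not actually prove this proposition --- its ``proof'' is a pointer to the matrix-theoretic argument of Paul \cite{Paul05a} (real case) and the determinantal argument of Baik--Ben Arous--P\'ech\'e \cite{BaikBP04} (complex case) --- and your sketch is precisely Paul's secular-equation route: Schur complement to a $k\times k$ characteristic equation, concentration of the off-diagonal quadratic form to make $\mathbf{K}_n(l)$ asymptotically scalar, reduction to a scalar equation via simplicity of $\lambda_j$, and a delta-method expansion about $\rho_j$, with the hypothesis $\sqrt{m}\,|c-n/m|\to 0$ correctly identified as killing the $O(\sqrt{n}(c_n-c))$ bias and the $2/\beta$ correctly traced to the real-versus-complex quadratic-form CLT. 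So this is essentially the same (delegated) approach as the paper; the only slip worth fixing in a full write-up is that the quadratic form $\mathbf{S}_{12}(l\mathbf{I}-\lambda\mathbf{S}_{22})^{-1}\mathbf{S}_{21}$ concentrates around $\frac{1}{m}\mathrm{tr}\bigl[(l\mathbf{I}-\lambda\mathbf{S}_{22})^{-1}\mathbf{S}_{22}\bigr]\,\mathbf{I}_k$ rather than $\frac{1}{m}\mathrm{tr}\bigl[(l\mathbf{I}-\lambda\mathbf{S}_{22})^{-1}\bigr]\,\mathbf{I}_k$ --- an affine (hence structurally harmless, but numerically consequential) discrepancy that must be corrected for the limiting secular equation to reproduce $\rho_j=\lambda_j\bigl(1+\lambda c/(\lambda_j-\lambda)\bigr)$ and for the variance to collapse to $\frac{2}{\beta}\lambda_j^2\bigl(1-c/(\lambda_j-\lambda)^2\bigr)$.
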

\begin{proof}
A matrix theoretic proof for the real case may be found in \cite{Paul05a} while a determinental proof for the complex case may be found in \cite{BaikBP04}. The result has been strengthened for non-Gaussian situations by Baik and Silverstein for general $c \in (0,\infty)$ \cite{silverstein:private}. 
\end{proof}

\section{Estimating the number of signals}\label{sec:number of signals}
 We derive an information theoretic estimator for the number of signals by exploiting the distributional properties of the moments of eigenvalues of the (signal-free) SCM given by Propositions \ref{prop:mandp fluctuations} and \ref{prop:main prop}, as follows. The overarching principle used is that, given an observation ${\bf y} = [y(1), \ldots, y(N)]$ and a family of models, or equivalently a parameterized family of probability densities $f({\bf y}|\bm{\theta})$ indexed by the parameter vector $\bm{\theta}$, we select the model which gives the minimum Akaike Information Criterion (AIC) \cite{akaike74a} defined by
\begin{equation}\label{eq:aic criterion}
{\rm AIC}_{k} = -2 \log f({\bf y} | \widehat{\bm{\theta}}) + 2 k  
\end{equation}
where $\widehat{\bm{\theta}}$ is the maximum likelihood estimate of $\bm{\theta}$, and $k$ is the number of free parameters in $\bm{\theta}$. Since the noise variance is unknown, the parameter vector of the model, denoted by $\bm{\theta}_{k}$, is given by 
\begin{equation}
\bm{\theta}_{k} = [\lambda_{1},\ldots,\lambda_{k},\sigma^{2}]^{T}.
\end{equation}
There are thus $k+1$ free parameters in $\bm{\theta}_{k}$. Assuming that there are $k<\min(n,m)$ signals, the maximum likelihood estimate of the noise variance is given by \cite{Anderson63} (which Proposition \ref{prop:mandp fluctuations} corroborates in the $m<n$ setting)
\begin{equation}
\widehat{\sigma}^{2}_{(k)} = \dfrac{1}{n-k} \sum_{i=k+1}^{n} l_{i}
\end{equation}
where $l_{1} \geq \ldots \geq l_{n}$ are the eigenvalues of $\widehat{{\bf R}}$. Consider the test statistic
\begin{align}\label{eq:q aic}
q_{k} &= n \left[\dfrac{\frac{1}{n-k}\sum_{i=k+1}^{n} l_{i}^{2}}{\left(\widehat{\sigma}^{2}_{(k)}\right)^{2}}-(1+c)\right] - \left(\dfrac{2}{\beta}-1\right)c\\
&= n\left[\dfrac{\frac{1}{n-k}\sum_{i=k+1}^{n} l_{i}^{2}}{\left(\frac{1}{n-k} \sum_{i=k+1}^{n}l_{i}\right)^{2}}-(1+c)\right]-\left(\dfrac{2}{\beta}-1\right)c.
\end{align}
for a constant $c>0$. When $k>0$ signals are present and assuming $k \ll n$, then the distributional properties of the $n-k$ ``noise'' eigenvalues are closely approximated by the distributional properties of the eigenvalues given by Proposition \ref{prop:mandp fluctuations} of the signal-free SCM, {\it i.e.}, when $k=0$. It is hence reasonable to approximate the distribution of the statistic $q_{k}$ with the normal distribution whose mean and variance, for some $c>0$, given in Proposition \ref{prop:main prop}. The log-likelihood function $\log f(q_{k}| \widehat{{\bf \theta}})$, for large $n,m$ can hence be approximated by
\begin{equation}\label{eq:loglike approx}
-\log f(q_{k}| \widehat{{\bf \theta}}) \approx \dfrac{q_{k}^{2}}{2\frac{4}{\beta}c^{2}}+\underbrace{\dfrac{1}{2} \log 2\pi \frac{4}{\beta}c^{2}}_{{\rm Constant}}.
\end{equation}
In (\ref{eq:q aic}), and (\ref{eq:loglike approx}), it is reasonable (Bai and Silverstein provide an argument in \cite{BaiS04}) to use $c_{m} = n/m$ for the (unknown) limiting parameter $c = \lim n/m$. Plugging in $c \approx c_{m} = n/m$ into (\ref{eq:q aic}), and (\ref{eq:loglike approx}), ignoring the constant term on the right hand side of (\ref{eq:loglike approx}) when the log-likelihood function is substituted into (\ref{eq:aic criterion}) yields the estimator in (\ref{eq:new estimator}). Figure \ref{fig:score} plots sample realizations of the score function.

\begin{figure}[htp]
\centering
\subfigure[Complex signals: $n = 16$, $m = 32$.]{
\includegraphics[width=2.8in]{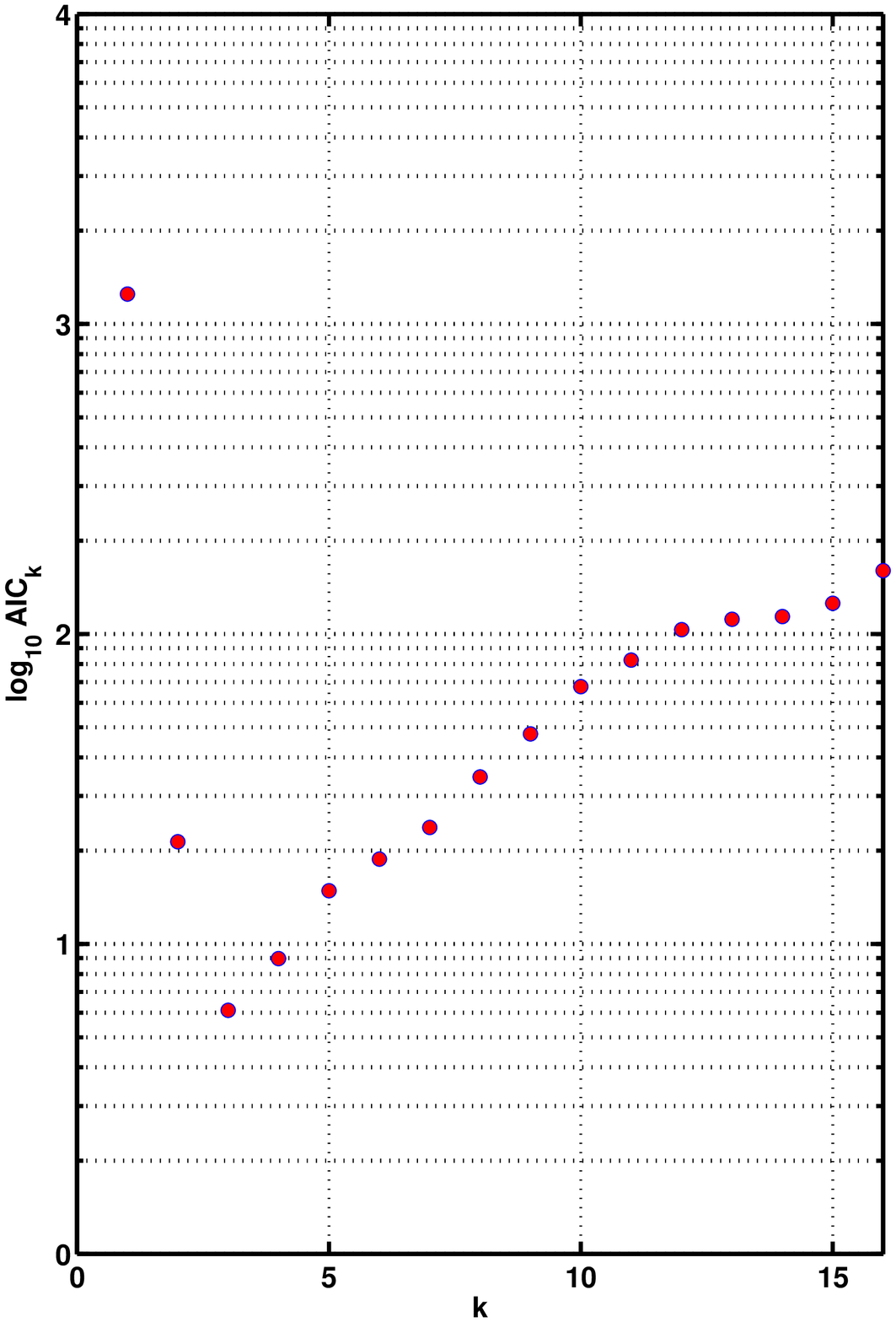}
}
\subfigure[Complex signals: $n = 32$, $m = 64$.]{
\includegraphics[width=2.8in]{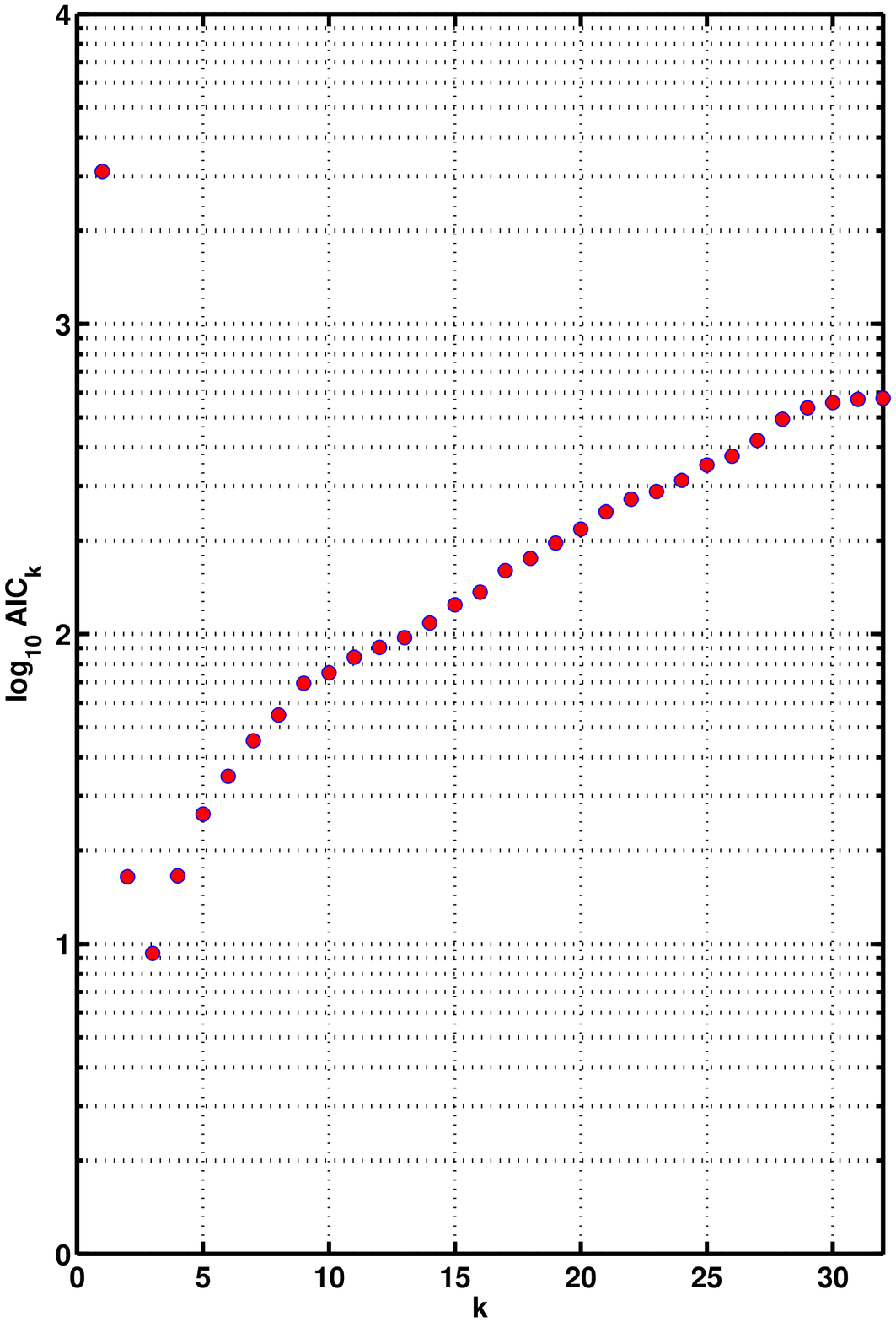}
}
\caption{Sample realizations of the proposed criterion when there $k=2$ complex valued signals and $\lambda_{1} = 10$, $\lambda_{2} = 3$ and $\lambda_{3} = \ldots = \lambda_{n} = 1$.}
\label{fig:score}
\end{figure}

\section{Extension to frequency domain and vector sensors}\label{sec:lrcf frequency domain}

When the $m$  snapshot vectors ${\bf x}_{i}(w_{j})$ for $j=1,\ldots,m$ represent Fourier coefficients vectors at frequency $w_{j}$  then the sample covariance matrix 
\begin{equation}
\widehat{{\bf R}}(w_{j}) = \dfrac{1}{m} \sum_{i=1}^{m} {\bf x}_{i}(w_{j}) {\bf x}_{i}(w_{j})'
\end{equation}
is the periodogram estimate of the spectral density matrix at frequency $w_{j}$. The time-domain approach carries over to the frequency domain so that the estimator in (\ref{eq:new estimator}) remains applicable with $l_{i} \equiv l_i(w_{j})$ where $l_{1}(w_{j}) \geq l_{2}(w_{j}) \geq \ldots \geq l_{n}(w_{j})$ are the eigenvalues of $\widehat{{\bf R}}(w_{j})$.

When the signals are wideband and occupy $M$ frequency bins, denoted by  $w_{1},\ldots,w_{M}$,  then the information on the number of signals present is contained in all the bins. The assumption that the observation time is much larger than the correlation times of the signals (sometimes referred to as the SPLOT assumption - stationary process, long observation time) ensures that the Fourier coefficients corresponding to the different frequencies are statistically independent. 

Thus the AIC based criterion for detecting the number of wideband signals that occupy the frequency bands $w_{1},\ldots,w_{M}$ is obtained by summing the corresponding criterion in (\ref{eq:new estimator}) over the frequency range of interest:

\theorembox{
\begin{subequations}
\begin{equation}
t_{j,k} = \left[(n-k)\dfrac{\sum_{i=k+1}^{n} l_{i}(w_{j})^{2}}{(\sum_{i=k+1}^{n} l_{i}(w_{j}))^{2}}- \left(1+\dfrac{n}{m}\right) \right]n-\left(\dfrac{2}{\beta}-1\right)\dfrac{n}{m}
\end{equation}
\begin{equation}
\hat{k}_{{\rm NEW}} = \argmin_{k\in \mathbb{N}:0\leq k < \min(n,m)} \sum_{j=1}^{M}\dfrac{\beta}{4} \left[\dfrac{m}{n}\right]^{2}\, t_{j,k}^{2} + 2M (k+1) 
\end{equation}
\end{subequations}
}

When the number of snapshots is severely constrained, the SPLOT assumption is likely to be violated so that the Fourier coefficients corresponding to different frequencies will not be statistically independent. This will likely degrade the performance of the proposed estimators.

When the measurement vectors represent quaternion valued narrowband signals, then $\beta =4$ so that the estimator in (\ref{eq:new estimator}) can be used. Quaternion valued vectors arise when the data collected from vector sensors is represented using quaternions as in \cite{miron06}.

\section{Consistency of the estimator and the effective number of identifiable signals}\label{sec:consistent number of signals}
For a fixed sample size, and system dimensionality, the probability of detecting a signal is the most practically useful criterion for comparing the performance of different estimators. For theoretical purposes, however, the large sample consistency of the estimator is (usually) more analytically tractable and hence often supplied as the justification for employing an estimator. We conjecture that the proposed algorithm is a consistent estimator of the true number of signals in the ``classical'' large sample asymptotic regime in the sense made explicit next.\\

\theorembox{
\begin{conjecture}\label{conj:classical consistency}
Let ${\bf R}$ be a $n \times n$ covariance matrix that satisfies the hypothesis of Proposition \ref{prop:spiked convergence}. Let $\widehat{{\bf R}}$ be a sample covariance matrix formed from $m$ snapshots. Then in the $n$ fixed, $m \to \infty$ limit, $\widehat{k}$ is a  consistent estimator of $k$ where $\widehat{k}$ is the estimate of the number of signals obtained using (\ref{eq:new estimator}).\\
\end{conjecture}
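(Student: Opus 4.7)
The plan is to establish $P(\hat{k}_{\rm NEW} = k) \to 1$ as $m \to \infty$ with $n$ held fixed, by bounding $P(\hat{k}_{\rm NEW} = j)$ for each $j \neq k$. Throughout, I would invoke the $c \to 0$ specialization of Proposition \ref{prop:spiked convergence}, which recovers Anderson's classical consistency: $l_i \to \lambda_i$ almost surely for all $i$, and in particular $l_i \to \sigma^2$ a.s.\ for $i > k$. I would split the argument into the underestimation case $j < k$ and the overestimation case $j > k$, and characterize the almost-sure and $O_p$ rates of the score $S_j := (\beta/4)(m/n)^2 t_j^2 + 2(j+1)$ in each regime.

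For underestimation ($j < k$), the surviving sum in $q_j$ includes signal eigenvalues $l_{j+1}, \ldots, l_k$ converging to distinct values strictly larger than $\sigma^2$, mixed with $n-k$ noise eigenvalues converging to $\sigma^2$. Since these limiting values are not all equal, Jensen (equivalently, the Cauchy--Schwarz/variance inequality) gives
\[
(n-j)\,\frac{\sum_{i=j+1}^{n}\lambda_i^{2}}{\bigl(\sum_{i=j+1}^{n}\lambda_i\bigr)^{2}} \;=\; 1+\delta_j, \qquad \delta_j>0.
\]
Hence $t_j \to n\delta_j$ almost surely and $S_j \sim (\beta/4)\,m^2\,\delta_j^{2} \to \infty$ at rate $m^2$, which will dominate $S_k$ once I show $S_k = O_p(m)$. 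For the true value $j=k$ I would apply the multivariate CLT for Wishart sample moments of the $n-k$ noise eigenvalues, then use the delta method on the smooth map $(x,y)\mapsto y/x^2$ (exactly as in the proof of Proposition \ref{prop:main prop}) to conclude $\sqrt{m}\,[(n-k)q_k - 1] \Rightarrow \mathcal{N}(0,\tau_k^2)$. Combined with the deterministic $O(1/m)$ bias corrections in $t_k$, this gives $t_k = O_p(1/\sqrt{m})$ and $(\beta/4)(m/n)^2 t_k^2 = O_p(m)$, ruling out all $j<k$ since $m \ll m^2$.

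The main obstacle is overestimation ($j > k$). The same CLT-plus-delta-method argument yields $t_j = O_p(1/\sqrt{m})$ and $S_j = O_p(m)$, so $S_j$ and $S_k$ are of the same stochastic order while the penalty gap $2(j-k)$ is bounded. To establish $P(S_j > S_k)\to 1$, I would study the joint asymptotic normality of $(\sqrt{m}\,t_j,\sqrt{m}\,t_k)$ by expressing both as smooth functions of the nested sample moments $\sum_{i\in I} l_i$ and $\sum_{i\in I} l_i^2$ on the nested index sets $\{j+1,\ldots,n\}\subset\{k+1,\ldots,n\}$; the vector of such moments is jointly Gaussian in the limit, so the delta method delivers a bivariate Gaussian limit for $(\sqrt{m}\,t_j, \sqrt{m}\,t_k)$. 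The crucial observation to exploit is that the target $1 + n/m$ and the correction $(2/\beta-1)n/m$ appearing in $t_j$ are calibrated to the full $n$-dimensional problem, not to the effective $(n-j)$-dimensional noise-only subproblem of dimension ratio $c'_j = (n-j)/m$. Expanding to the next order shows $E[t_j] \approx -(j-k)n/m + o(1/m)$ relative to $E[t_k]$, and $\mathrm{Var}(t_j)$ strictly exceeds $\mathrm{Var}(t_k)$ because $q_j$ averages over fewer noise eigenvalues. The hard part is then to convert these mean- and variance-level inequalities into a statement about the \emph{squared} statistics, namely $(m/n)^2(t_j^2 - t_k^2) > -8(j-k)/\beta$ with probability tending to $1$. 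This conversion is the delicate step: unlike the MDL estimator, whose $\log m$ penalty trivially dominates any $O_p(1)$ log-likelihood gap, here a constant, AIC-like penalty must be supplemented by the systematic bias injected by the $(1+n/m)$ target. Making this dominance rigorous will require a careful comparison of the bivariate Gaussian limit of $(\sqrt{m}\,t_j,\sqrt{m}\,t_k)$ and a probabilistic bound on a quadratic form in correlated Gaussians, and it is here that I would expect the bulk of the technical work to concentrate.
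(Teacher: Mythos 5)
First, a point of calibration: the paper does \emph{not} prove this statement --- it is deliberately stated as Conjecture~\ref{conj:classical consistency} and left open, the authors remarking in Section~\ref{sec:consistent number of signals} that a proof would require ``a more refined analysis that characterizes the fluctuations of subsets of the (ordered) `noise' eigenvalues,'' and offering only simulations as corroboration. So there is no proof in the paper to compare yours against; your attempt must stand on its own, and it does not yet close the statement. The underestimation half of your plan is sound and is the easy half: for $j<k$ the Jensen gap makes $t_j\to n\delta_j>0$ a.s., the score grows like $m^{2}$, and it is dominated by the stochastically bounded score at $j=k$.

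The genuine gaps are in your rates and in the overestimation half. Your claimed CLT at scale $\sqrt{m}$ for the ratio statistic is degenerate in the fixed-$n$ regime: the delta-method linearization of $g(x,y)=y/x^{2}$ at $(\sigma^{2},\sigma^{4})$ annihilates the first-order fluctuations (indeed $-\tfrac{2}{\sigma^{2}}\delta x+\tfrac{1}{\sigma^{4}}\delta y=\tfrac{1}{\sigma^{4}(n-k)}\sum_{i>k}(l_{i}-\sigma^{2})^{2}$), so $q_{k}-1=O_{p}(1/m)$, not $O_{p}(1/\sqrt{m})$, and the correct scaling is $m$, under which the limit is a quadratic form in the limiting noise-eigenvalue fluctuations --- which, for fixed $n$ and a population eigenvalue of multiplicity $n-k$, are the \emph{ordered eigenvalues of a Gaussian (GOE/GUE) matrix} per Anderson's classical theory, not coordinates of a Gaussian vector. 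Hence your proposed ``bivariate Gaussian limit of $(\sqrt{m}\,t_{j},\sqrt{m}\,t_{k})$'' is the zero vector and cannot drive the comparison; note also that Proposition~\ref{prop:mandp fluctuations} is a large-$n$ result and is not available with $n$ fixed. Second, the ``systematic bias injected by the $(1+n/m)$ target'' is not a discriminator: the subtracted quantity $(1+n/m)n+(\tfrac{2}{\beta}-1)\tfrac{n}{m}$ is identical for every $j$ and cancels in $t_{j}-t_{k}$; what differs is only that for $j>k$ the ratio is computed over the lower $n-j$ \emph{order statistics} of the $n-k$ correlated noise eigenvalues, which is not a fresh $(n-j)$-dimensional noise-only problem, so the expansion $E[t_{j}]-E[t_{k}]\approx-(j-k)n/m$ is unjustified --- this is exactly the ``fluctuations of subsets of the ordered noise eigenvalues'' obstruction the authors cite. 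Finally, even with the correct joint limit, the score difference $S_{j}-S_{k}$ for $j>k$ converges in distribution to a nondegenerate $O_{p}(1)$ random variable plus the constant penalty gap $2(j-k)$; concluding $P(S_{j}>S_{k})\to1$ then requires the limiting law to put no mass below $-2(j-k)$, a structural support property that no mean/variance comparison can deliver. This is precisely the mechanism by which the constant-penalty AIC of Wax--Kailath is \emph{provably inconsistent} in this very regime, so any correct argument must isolate what makes the squared statistic $t_{k}^{2}$ behave differently; that step, which you rightly flag as the crux, is the open content of the conjecture.
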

}

\noindent The ``classical'' notion of large sample consistency does not adequately capture the suitability of an estimator in  high dimensional, sample starved settings when $m < n$ or $m = O(n)$. In such settings, it is more natural to investigate the consistency properties of the estimator in the large system, large sample limit instead. We can use Proposition \ref{prop:spiked convergence} to establish an important property of the proposed estimator in such a limit.

\begin{theorem}\label{th:equivalent Rs}
Let ${\bf R}$ and $\widetilde{{\bf R}}$ be two $n\times n$ sized covariance matrices whose eigenvalues are related as
\begin{subequations}
\begin{equation}
{\bf \Lambda} = \textrm{diag}(\lambda_{1},\ldots,\lambda_{p},\lambda_{p+1},\ldots,\lambda_{k},\lambda,\ldots,\lambda)
\end{equation}
\begin{equation}
\widetilde{{\bf \Lambda}} = \textrm{diag}(\lambda_{1},\ldots,\lambda_{p},\lambda,\ldots,\lambda)
\end{equation}
\end{subequations}
where for some $c \in (0,\infty)$, and all $i = p+1,\ldots,k$, $\lambda < \lambda_{i} \leq \lambda\, (1+\sqrt{c})$. Let $\widehat{{\bf R}}$ and $\widehat{\widetilde{{\bf R}}}$ be the associated sample covariance matrices formed from $m$ snapshots. Then for every $n,m(n) \to \infty$ such that $c_{m}= n/m \to c$, 
\begin{subequations}\label{eq:equivalent Rs}
\begin{equation}
\textrm{Prob}(\widehat{k} = j \, | \, {\bf R}) \to \textrm{Prob}(\widehat{k} = j \, | \, \widetilde{{\bf R}})  \qquad \textrm{for } j = 1,\ldots, p
\end{equation}
\textrm{and}
\begin{equation}
  \textrm{Prob}(\widehat{k} > p \, | \, {\bf R}) \to  \textrm{Prob}(\widehat{k} > p \, | \, \widetilde{{\bf R}})
\end{equation}
\end{subequations}
where the convergence is almost surely and $\widehat{k}$ is the estimate of the number of signals obtained using the algorithm in (\ref{eq:new estimator}).
\end{theorem}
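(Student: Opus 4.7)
The key observation is that Proposition~\ref{prop:spiked convergence} asserts that population eigenvalues in the interval $(\lambda,\lambda(1+\sqrt{c})]$ produce sample eigenvalues that are asymptotically indistinguishable from those produced by pure noise of variance $\lambda$: both concentrate almost surely at the right edge $\mu_{+}:=\lambda(1+\sqrt{c})^{2}$ of the Marchenko--Pastur support. The plan is to leverage this to argue that the vector of test statistics $(t_{k'})$ used in the score function has the same asymptotic joint distribution under $\widehat{\mathbf{R}}$ and under $\widehat{\widetilde{\mathbf{R}}}$, and then invoke continuity of the argmin to conclude that the distribution of $\widehat{k}$ coincides in the limit.

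First I would apply Proposition~\ref{prop:spiked convergence} to both SCMs: under $\widehat{\mathbf{R}}$ the top $p$ sample eigenvalues $l_{1},\ldots,l_{p}$ converge almost surely to $\mu_{j}:=\lambda_{j}\bigl(1+\lambda c/(\lambda_{j}-\lambda)\bigr)$, and the next $k-p$ sample eigenvalues $l_{p+1},\ldots,l_{k}$ converge almost surely to $\mu_{+}$; under $\widehat{\widetilde{\mathbf{R}}}$ the top $p$ eigenvalues have the same almost sure limits, and by Proposition~\ref{prop:mandp density} (which is insensitive to a fixed number of spike perturbations) the largest "bulk" eigenvalues also converge to $\mu_{+}$. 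Next I would compare the partial sums $S_{\alpha}(k'):=\sum_{i=k'+1}^{n}l_{i}^{\alpha}$ for $\alpha=1,2$ and $k'\in\{0,1,\ldots,p\}$. Splitting the sum as $\sum_{i=k'+1}^{p}+\sum_{i=p+1}^{n}$, the first block agrees between the two models by step one; for the second block, the $k-p$ sub-threshold spike eigenvalues in $\widehat{\mathbf{R}}$ contribute $(k-p)\mu_{+}^{\alpha}+o(1)$ and the remaining $n-k$ eigenvalues behave like a signal-free bulk, whereas in $\widehat{\widetilde{\mathbf{R}}}$ the $n-p$ eigenvalues form a signal-free bulk whose top $k-p$ eigenvalues themselves concentrate at $\mu_{+}$. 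Invoking the spiked-model analogue of the CLT of Proposition~\ref{prop:mandp fluctuations}, both pairs $(S_{1}(k'),S_{2}(k'))$ and $(\widetilde{S}_{1}(k'),\widetilde{S}_{2}(k'))$ admit the \emph{same} limiting joint Gaussian law.

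Applying the delta method exactly as in the proof of Proposition~\ref{prop:main prop} to $g(x,y)=y/x^{2}$ then yields that the joint limiting distribution of $(t_{0},t_{1},\ldots,t_{\min(n,m)-1})$ is identical under the two models. Since the AIC-type score $(\beta/4)(m/n)^{2}t_{k'}^{2}+2(k'+1)$ is continuous in the $t_{k'}$ and the argmin is a.s.\ continuous at points of uniqueness of the limiting score, the continuous mapping theorem delivers $\Pr(\widehat{k}=j\mid\mathbf{R})-\Pr(\widehat{k}=j\mid\widetilde{\mathbf{R}})\to 0$ for each $j\le p$ and $\Pr(\widehat{k}>p\mid\mathbf{R})-\Pr(\widehat{k}>p\mid\widetilde{\mathbf{R}})\to 0$.

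The main obstacle is upgrading the a.s.\ coincidence of individual eigenvalue limits and of the empirical distribution to coincidence of the $O(1)$-level fluctuations of the partial sums $S_{\alpha}(k')$. The $k-p$ sub-threshold spike eigenvalues of $\widehat{\mathbf{R}}$ are pinned at $\mu_{+}$ at Tracy--Widom scale, while the $k-p$ largest bulk eigenvalues of $\widehat{\widetilde{\mathbf{R}}}$ that effectively ``replace'' them also live at the edge at Tracy--Widom scale but with a different, edge-type law; one must show that their contributions to the sums agree up to the $1/n$ corrections that drive the limiting Gaussian in Proposition~\ref{prop:main prop}. The cleanest route is a Stieltjes-transform comparison of the two spectral measures away from the edge (using that both are rank-$k$ perturbations of a common Wishart matrix), reducing the sub-threshold spike contributions to bulk linear statistics whose CLTs coincide.
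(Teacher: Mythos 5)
Your proposal rests on the same pivot as the paper's own proof: Proposition~\ref{prop:spiked convergence} forces the sub-threshold population eigenvalues $\lambda_{p+1},\ldots,\lambda_{k}$ to produce sample eigenvalues that converge almost surely to the bulk edge $\lambda(1+\sqrt{c})^{2}$, so that the top $p+1$ sample eigenvalues of $\widehat{\mathbf{R}}$ and $\widehat{\widetilde{\mathbf{R}}}$ share the same almost-sure limits, and the distribution of $\widehat{k}$ should therefore coincide asymptotically. The difference is one of depth rather than direction. The paper's proof is a two-sentence sketch that ends with the bare assertion ``the fluctuations about this limit will hence be identical,'' from which (\ref{eq:equivalent Rs}) is read off. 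You instead push the argument through the actual decision statistic: you decompose the partial sums $\sum_{i=k'+1}^{n} l_{i}^{\alpha}$, invoke a spiked analogue of the CLT in Proposition~\ref{prop:mandp fluctuations}, rerun the delta method of Proposition~\ref{prop:main prop}, and close with the continuous mapping theorem applied to the argmin in (\ref{eq:new estimator}). That chain is exactly what a complete proof would require, and --- importantly --- you correctly isolate the one step that neither you nor the paper actually proves: that matching first-order (almost-sure) limits of the edge eigenvalues implies matching $O(1/n)$-level fluctuations of the linear statistics entering $t_{k'}$. The sub-threshold spikes of $\widehat{\mathbf{R}}$ and the top bulk eigenvalues of $\widehat{\widetilde{\mathbf{R}}}$ both sit at the edge, but a priori with different fluctuation laws, so the paper's ``hence identical'' is precisely the nontrivial claim; your proposed Stieltjes-transform comparison of the two rank-$k$ perturbations is a sensible route to it, and is more than the paper supplies. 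In short: same key lemma and same strategy, but your version makes explicit (and honestly flags as open) the fluctuation-matching step that the paper elides, which is consistent with the authors' own admission immediately after the theorem that a ``more refined analysis'' of the ordered noise eigenvalues is beyond what they can currently prove.
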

\begin{proof}
The result follows from Proposition \ref{prop:spiked convergence}. The almost sure convergence of the sample eigenvalues $l_{j} \to \lambda(1+\sqrt{c})^{2}$ for $j = p+1,\ldots,k$ implies that $i$-th largest eigenvalues of $\widehat{{\bf R}}$ and $\widehat{\widetilde{{\bf R}}}$, for $i=1,\ldots,p+1$, converge to the same limit almost surely. The fluctuations about this limit will hence be identical so that (\ref{eq:equivalent Rs}) follows in the asymptotic limit. 
\end{proof}

Note that the rate of convergence to the asymptotic limit for $\textrm{Prob}(\widehat{k} > p \, | \, {\bf R})$ and $\textrm{Prob}(\widehat{k} > p \, | \, \widetilde{{\bf R}})$ will, in general, depend on the eigenvalue structure of ${\bf R}$ and may be arbitrarily slow. Thus, Theorem \ref{th:equivalent Rs} yields \underline{no insight into rate of convergence type issues} which are important in practice. Rather, the theorem is a statement on the asymptotic equivalence, from an identifiability point of view, of sequences of sample covariance matrices which are related in the manner described. At this point, we are unable to prove the consistency of the proposed estimator as this would require more a refined analysis that characterizes the fluctuations of subsets of the (ordered) ``noise'' eigenvalues. The statement regarding consistency of the proposed estimator, in the sense of large system, large sample limit, is presented as a conjecture with numerical simulations used as non-definitive yet corroborating evidence.\\

\theorembox{
\begin{conjecture}\label{conj:consistency}
Let ${\bf R}$ be a $n \times n$ covariance matrix that satisfies the hypothesis of Proposition \ref{prop:spiked convergence}. Let $\widehat{{\bf R}}$ be a sample covariance matrix formed from $m$ snapshots. Define
\begin{equation}\label{eq:effective rank}
k_{{\rm eff}}(c \, | \, {\bf R}) := \textrm{Number of eigenvalues of }{\bf R} > \lambda(1+\sqrt{c}).
\end{equation}
Then in $m,n \to \infty$ limit with $c_{m}=n/m \to c$, $\widehat{k}$ is a  consistent estimator of $k_{{\rm eff}}(c)$ where $\widehat{k}$ is the estimate of the number of signals obtained using the algorithm in (\ref{eq:new estimator}).\\
\end{conjecture}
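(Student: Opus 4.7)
The plan is to establish consistency by controlling underestimation and overestimation separately, after first invoking Theorem~\ref{th:equivalent Rs} to reduce to the case where every ``signal'' eigenvalue of ${\bf R}$ is strictly above the detectability threshold $\lambda(1+\sqrt c)$, so that the true number of identifiable signals equals $k := k_{{\rm eff}}$. This reduction is essential because, by Proposition~\ref{prop:spiked convergence}, subthreshold spikes are asymptotically absorbed into the Mar\v{c}enko--Pastur bulk and cannot be separated by any sample-eigenvalue-only estimator.

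For the no-underestimation direction ($j<k$), I would combine Proposition~\ref{prop:spiked convergence} with a Taylor expansion of $q_{(j)}:=S_2/S_1^2$, where $S_p=(n-j)^{-1}\sum_{i=j+1}^n l_i^p$, around the null moments $(\lambda,\lambda^2(1+c))$. Each of the top $k$ sample eigenvalues concentrates almost surely at $\rho_i=\lambda_i(1+\lambda c/(\lambda_i-\lambda)) > \lambda(1+\sqrt c)^2$, while the remaining $n-k$ sample eigenvalues still obey the CLT of Proposition~\ref{prop:mandp fluctuations}, since a finite-rank perturbation does not disturb the linear spectral statistics of the bulk. The expansion then yields
\begin{equation*}
n\bigl[q_{(j)}-(1+c)\bigr]\;=\;\frac{1}{\lambda^{2}}\sum_{i=j+1}^{k}\bigl[\rho_i^{2}-2\lambda(1+c)\rho_i+\lambda^{2}(1+c)\bigr]+O_p(1),
\end{equation*}
whose deterministic piece scales as $\Theta(1)$ per missing signal and is generically nonzero in the identifiable regime. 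Once its magnitude dominates the noise-level fluctuations, $(\beta/4)(m/n)^2 t_j^2$ exceeds the score at $j=k$ by a margin that swamps the penalty differential $2(k-j)$, yielding $\mathrm{Prob}(\widehat k<k)\to 0$.

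For the no-overestimation direction ($j>k$), the sums defining $t_j$ involve only bulk eigenvalues with a finite number removed from the top, so $t_j \overset{\mathcal D}{\longrightarrow} \mathcal{N}(0,4c^2/\beta)$ by the same delta-method argument as in Proposition~\ref{prop:main prop}. The score gap becomes $(\beta/4)(m/n)^2(t_j^{2}-t_k^{2})+2(j-k)$, with both quadratic pieces $O_p(1)$. \textbf{This is where the main obstacle lies.} The penalty $2(k+1)$ is of AIC form rather than MDL form $\log m$, and AIC-type penalties classically produce a constant, nonvanishing overestimation probability. To push this probability to zero one would need either a joint functional CLT for $j\mapsto t_j$ that concentrates the process tightly enough for the linear penalty to dominate, or negative-correlation or monotonicity structure inside the ordered ``noise'' partial sums that precludes large positive excursions of $t_j^{2}-t_k^{2}$. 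Extending the marginal CLTs of Propositions~\ref{prop:mandp fluctuations}--\ref{prop:main prop} to such a joint statement, and then showing it is sharp enough to defeat an AIC-style penalty, is where the present random-matrix toolkit is inadequate; this is precisely why the statement is presented as a conjecture corroborated by simulations rather than as a theorem.
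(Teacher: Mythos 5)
First, a point of calibration: the paper does not prove this statement. It is stated explicitly as a conjecture, and the authors say in the preceding discussion that they ``are unable to prove the consistency of the proposed estimator as this would require a more refined analysis that characterizes the fluctuations of subsets of the (ordered) `noise' eigenvalues,'' offering only simulations as corroboration. So there is no proof in the paper to compare against, and your proposal must stand on its own. To your credit, your diagnosis of the overestimation direction ($j>k$) is essentially the authors' own obstruction: one needs joint distributional control of the whole process $j\mapsto t_{j}$ built from ordered partial sums of noise eigenvalues, which the marginal CLTs of Propositions~\ref{prop:mandp fluctuations} and~\ref{prop:main prop} do not supply, and even granted such a result an AIC-type constant penalty classically leaves a nonvanishing overestimation probability. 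Your reduction via Theorem~\ref{th:equivalent Rs} to the case where all spikes exceed $\lambda(1+\sqrt{c})$ is also sound.

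The genuine gap you do not flag is in the underestimation direction, where your argument quietly fails for the same reason you (correctly) worry about in the other direction. Each omitted supercritical sample eigenvalue contributes only $O(1/n)$ to the normalized moments $S_{1},S_{2}$, so after multiplication by $n$ the deterministic shift in $t_{j}$ is $\Theta(1)$ per missing signal --- it does not grow with $n$. Consequently $\frac{\beta}{4}\left(\frac{m}{n}\right)^{2}\left(t_{j}^{2}-t_{k}^{2}\right)$ is $O_{p}(1)$ with a fixed positive centering, and the event that it drops below the penalty differential $2(k-j)$ has a limiting probability that is strictly positive (generically tiny when $\lambda_{j}$ is well above threshold, which is why the simulations look clean, but tending to something appreciable as $\lambda_{j}\downarrow\lambda(1+\sqrt{c})$, where $\rho_{j}\to\lambda(1+\sqrt{c})^{2}$ and your bias term shrinks toward the fluctuation scale $2c/\sqrt{\beta}$). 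Your phrase ``once its magnitude dominates the noise-level fluctuations'' is a hypothesis about fixed constants, not an asymptotic statement, so $\mathrm{Prob}(\widehat{k}<k)\to 0$ does not follow from it. As written, your sketch therefore establishes neither direction; at best it shows that both error probabilities have small (but positive) limits for well-separated spikes, which is weaker than the consistency claimed in Conjecture~\ref{conj:consistency} and is consistent with the authors' decision to leave the statement unproven.
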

}
\vspace{0.1in}
\noindent
Motivated by Proposition \ref{prop:spiked convergence}, we (heuristically) define the \textit{effective number of (identifiable) signals} as
\begin{equation}\label{eq:effective number of signals}
k_{{\rm eff}}({\bf R}) = \#\textrm{ eigs. of }{\bf R} > \sigma^{2}\left(1+\sqrt{\dfrac{n}{m}}\right).
\end{equation}
Conjecture \ref{conj:consistency} then simply states that the proposed estimator is a consistent estimator of the effective number of (identifiable) signals in the large system, large sample limit.

\subsection{The asymptotic identifiability of two closely spaced signals}

Suppose there are two uncorrelated (hence, independent) signals so that ${\bf R}_{s} = \textrm{diag}(\sigma_{{\rm S}1}^{2},\sigma_{{\rm S}2}^{2})$. In (\ref{eq:superposition problem}) let ${\bf A} = [{\bf v}_{1} {\bf v}_{2}]$. In a sensor array processing application, we think of ${\bf v}_{1} \equiv {\bf v}(\theta_{1})$ and ${\bf v}_{2} \equiv {\bf v}_{2}(\theta_{2})$ as encoding the array manifold vectors for a source and an interferer with powers $\sigma_{{\rm S}1}^{2}$ and $\sigma_{{\rm S}2}^{2}$, located at $\theta_{1}$ and $\theta_{2}$, respectively. The covariance matrix given by 
\begin{equation}
{\bf R} = \sigma_{{\rm S}1}^{2} {\bf v}_{1}{\bf v}_{1}'+ \sigma_{{\rm S}2}^{2}  {\bf v}_{2}{\bf v}_{2}' + \sigma^{2} {\bf I}
\end{equation}
has the $n-2$ smallest eigenvalues $\lambda_{3} = \ldots = \lambda_{n} = \sigma^{2}$ and the two largest eigenvalues
\begin{subequations}\label{eq:ev 2 sources}
\begin{equation}
\lambda_{1} =  
\sigma^{2}+ \dfrac{\left(\sigma_{{\rm S}1}^{2} \parallel \! {\bf v}_{1} \!\parallel^{2}+\sigma_{{\rm S}2}^{2} \parallel \! {\bf v}_{2} \!\parallel^{2}\right)}{2} + \dfrac{
\sqrt{\left(\sigma_{{\rm S}1}^{2} \parallel \! {\bf v}_{1} \!\parallel^{2}-\sigma_{{\rm S}2}^{2} \parallel \! {\bf v}_{2} \!\parallel^{2}\right)^{2}+4\sigma_{{\rm S}1}^{2}\sigma_{{\rm S}2}^{2} |\langle {\bf v}_{1}, {\bf v}_{2} \rangle| ^{2}}}{2}
\end{equation}
\begin{equation}
\lambda_{2} =  \sigma^{2}+ \dfrac{\left(\sigma_{{\rm S}1}^{2} \parallel \! {\bf v}_{1} \!\parallel^{2}+\sigma_{{\rm S}2}^{2} \parallel \! {\bf v}_{2} \!\parallel^{2}\right)}{2} -\dfrac{
\sqrt{\left(\sigma_{{\rm S}1}^{2} \parallel \! {\bf v}_{1} \!\parallel^{2}-\sigma_{{\rm S}2}^{2} \parallel \! {\bf v}_{2} \!\parallel^{2}\right)^{2}+4\sigma_{{\rm S}1}^{2}\sigma_{{\rm S}2}^{2} |\langle {\bf v}_{1}, {\bf v}_{2} \rangle| ^{2}}}{2}
\end{equation}
\end{subequations}
respectively. Applying the result in Proposition \ref{prop:spiked convergence} allows us to express the effective number of signals as  
\begin{equation}
k_{{\rm eff}} = 
\begin{cases}
2 &\qquad\textrm{if    } \phantom{~~~~}\sigma^{2} \left(1+\sqrt{\dfrac{n}{m}} \right) < \lambda_{2}\\
  &\\
1 &\qquad\textrm{if    } \phantom{~~~~}\lambda_{2} \leq \sigma^{2} \left(1+\sqrt{\dfrac{n}{m}} \right) < \lambda_{1}\\
   &\\
0 &\qquad\textrm{if    } \phantom{~~~~}\lambda_{1} \leq \sigma^{2} \left(1+\sqrt{\dfrac{n}{m}} \right) \\
\end{cases}
\end{equation}
In the special situation when  $\parallel\! {\bf v}_{1} \!\parallel = \parallel\! {\bf v}_{2}\! \parallel = \parallel\! {\bf v}\! \parallel$ and $\sigma_{{\rm S1}}^{2} = \sigma_{{\rm S2}}^{2} = \sigma_{{\rm S}}^{2}$, we can (in an asymptotic sense) reliably detect the presence of \textit{both signals} from the sample eigenvalues alone whenever
\begin{empheq}[
box=\setlength{\fboxrule}{1pt}\fbox]{equation}
\label{eq:array tradeoff}
\textrm{Asymptotic identifiability condition}: \qquad \sigma_{{\rm S}}^{2} \parallel \! {\bf v} \!\parallel^{2} \left(1-\dfrac{|\langle {\bf v}_{1},{\bf v}_{2}\rangle |}{\parallel \! {\bf v}\parallel}\right)  > \sigma^{2} \sqrt{\dfrac{n}{m}}  
\end{empheq}
Equation (\ref{eq:array tradeoff}) captures the tradeoff between the identifiability of two closely spaced signals, the dimensionality of the system, the number of available snapshots and the cosine of the angle between the vectors ${\bf v}_{1}$ and ${\bf v}_{2}$.

We note that the concept of the effective number of signals is an asymptotic concept for large dimensions and relatively large sample sizes. For moderate dimensions and sample sizes, the fluctuations in the ``signal'' and ``noise'' eigenvalues affect the reliability of the underlying detection procedure as illustrated in Figure \ref{fig:signal blurs}. From Proposition \ref{prop:spiked convergence}, we expect that the largest ``noise'' eigenvalue will, with high probability, be found in a neighborhood around $\sigma^{2}(1+\sqrt{n/m})^{2}$ while the ``signal'' eigenvalues will, with high probability, be found in a neighborhood around $\lambda_{j} \left( 1+ \frac{\sigma^{2}\,n}{m(\lambda_{j}-\sigma^{2})}\right)$. From Proposition \ref{prop:spiked fluctuations}, we expect the ``signal'' eigenvalues to exhibit Gaussian fluctuations with a standard deviation of approximately $\sqrt{\frac{2}{\beta n}\lambda_{j}^{2} \left(1- \frac{n}{m(\lambda_{j}-\sigma^{2})^{2}} \right)}$. This motivates our definition of the metric $Z^{{\rm Sep}}_{j}$ given by
\begin{empheq}[
box=\setlength{\fboxrule}{1pt}\fbox]{equation}\label{eq:zjsep}
Z^{{\rm Sep}}_{j} := \dfrac{\lambda_{j} \left( 1+ \dfrac{\sigma^{2}\,n}{m(\lambda_{j}-\sigma^{2})}\right) - \sigma^{2}\left(1+\sqrt{\dfrac{n}{m}}\right)^{2}}{\sqrt{\dfrac{2}{\beta n}\lambda_{j}^{2} \left(1- \dfrac{n}{m(\lambda_{j}-\sigma^{2})^{2}} \right)}},
\end{empheq}
then measures the (theoretical) separation of the $j$-th ``signal'' eigenvalue from the largest ``noise'' eigenvalue in standard deviations of the $j$-the signal eigenvalue's fluctuations. Simulations suggest that reliable detection (with an empirical probability greater than 90\%) of the effective number of signals is possible if $Z^{{\rm Sep}}_{j}$ is larger than $5-15$. This large range of values for the minimum $Z^{{\rm Sep}}_{j}$, which we obtained from the results in Section \ref{sec:lrcf simulations}, suggests that a more precise characterization of the finite system, finite sample performance of the estimator will have to take into account the more complicated-to-analyze interactions between the ``noise'' and the ``signal'' eigenvalues that are negligible in the large system, large sample limit. Nonetheless, because of the nature of the random matrix results on which our guidance is based, we expect our heuristics to be more accurate in high-dimensional, relatively large sample size settings than the those proposed in \cite{kaveh87a} and \cite{fishler00a} which rely on Anderson's classical large sample asymptotics.

\begin{figure}[t]
\centering
\subfigure[When $n$ and $m$ large enough, so that the (largest) ``signal'' eigenvalue is sufficiently separated from the ``noise'' eigenvalues, then reliable detection is possible.]{
\label{fig:signal separates}
\psfrag{nminus2}{\makebox(0,0){$3$}}
\psfrag{nminus1}{\makebox(0,0){$2$}}
\psfrag{n}{\makebox(0,0){$1$}}
\psfrag{wiggling}{\makebox(0,0){\wiggling}}
\psfrag{wiggling2}{\makebox(0,0){\wigglingtwo}}
\includegraphics*[width=0.75\textwidth]{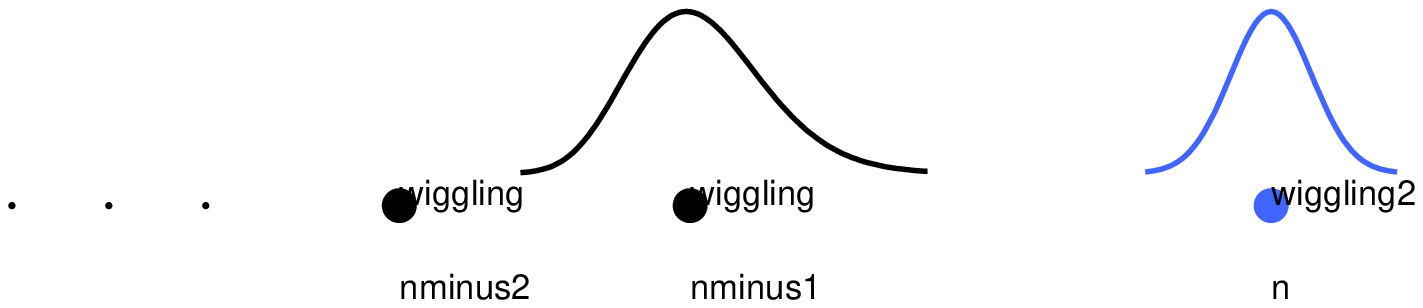}
}\\
\subfigure[When $n$ and $m$ are not quite large enough so that the (largest) ``signal'' eigenvalue is \textit{not} sufficiently separated from the largest ``noise'' eigenvalue, then reliable detection becomes challenging.]{
\label{fig:signal blurs}
\psfrag{nminus2}{\makebox(0,0){$3$}}
\psfrag{nminus1}{\makebox(0,0){$2$}}
\psfrag{n}{\makebox(0,0){$1$}}
\psfrag{wiggling}{\makebox(0,0){\wiggling}}
\psfrag{wiggling2}{\makebox(0,0){\wigglingtwo}}
\psfrag{wiggling3}{\makebox(0,0){\wigglingthree}}
\psfrag{wiggling4}{\makebox(0,0){\wigglingfour}}
\includegraphics*[width=.7\textwidth]{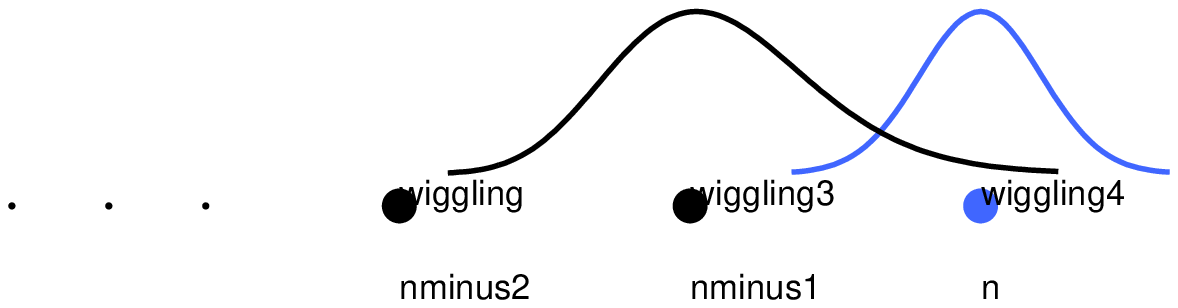}
}
\caption{The finite system dimensionality and sample size induced fluctuations of ``signal'' (blue) and ``noise'' (black) eigenvalues about their limiting positions are shown. The magnitude of the fluctuations impacts the ability to discriminate the ``signal'' eigenvalue from the largest ``noise'' eigenvalue.}
\label{tab:signal and noise fluctuations} 
\end{figure}

\section{Numerical simulations}\label{sec:lrcf simulations}

We now illustrate the performance of our estimator using Monte-Carlo simulations. The results obtained provide evidence for the consistency properties conjectured in Section \ref{sec:consistent number of signals}. In all of our simulations, we use a population covariance matrix ${\bf R}$ that has arbitrarily fixed, yet unknown, eigenvectors, $k=2$ ``signal'' eigenvalues with $\lambda_{1}=10$ and $\lambda_{2}=3$, and  $n-2$ ``noise'' eigenvalues with $\lambda_{3}=\ldots=\lambda_{n}=\lambda = \sigma^{2}=1$. We assume that the snapshot vectors ${\bf x}_{i}$ modelled as in (\ref{eq:superposition problem}) are complex valued so that we must plug in $\beta =2$ in (\ref{eq:new estimator}); the choice of complex valued signals is motivated by our focus on array signal processing/wireless communications applications. 

Over $4000$ Monte-Carlo simulations, and various $n$ and $m$, we  obtain an estimate of the number of signals from the eigenvalues of the sample covariance matrix using our new estimator and the modified Wax-Kailath estimator, described in (\ref{eq:new estimator}) and (\ref{eq:WK MDL mod}) respectively. We do not consider the Wax-Kailath AIC estimator in (\ref{eq:aic est}) in our simulations because of its proven \cite{kailath-wax} inconsistency in the fixed system size, large sample limit -  we are interested in estimators that exhibit the consistency conjectured in Section \ref{sec:consistent number of signals} in \textit{both} asymptotic regimes. A thorough comparison of the performance of our estimator with other estimators (and their ad-hoc modifications) found in the literature is beyond the scope of this article. 

\begin{figure}
\centering
\subfigure[Empirical probability that $\widehat{k} =2$ for various $n$ and $m$.]{
\label{fig:true consistency}
\includegraphics[width=6.05in]{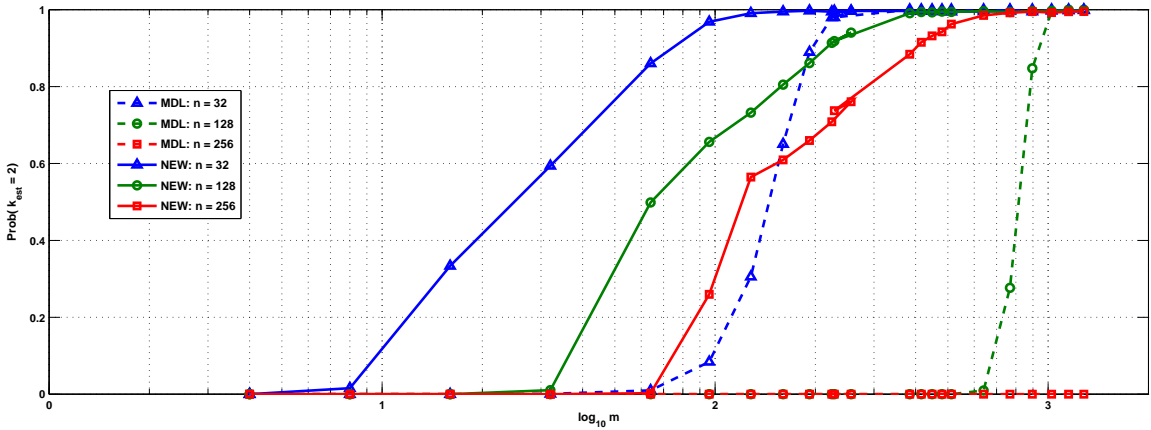}
}\\
\subfigure[Empirical probability that $\widehat{k} =1$ for various $n$ and $m$.]{
\label{fig:sim1 phat1}
\includegraphics[width=6.05in]{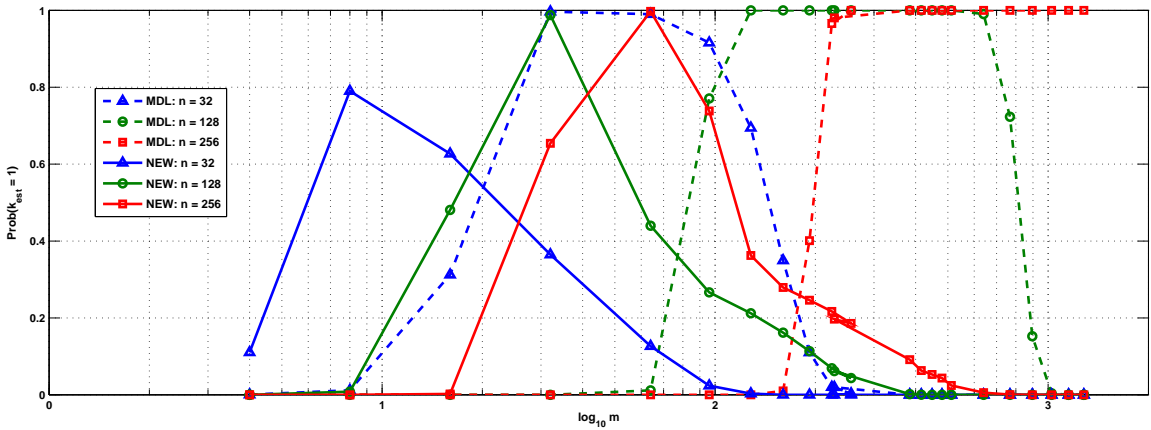}
}\\
\subfigure[Empirical probability that $\widehat{k} =0$ for various $n$ and $m$.]{
\label{fig:sim1 phat0}
\includegraphics[width=6.05in]{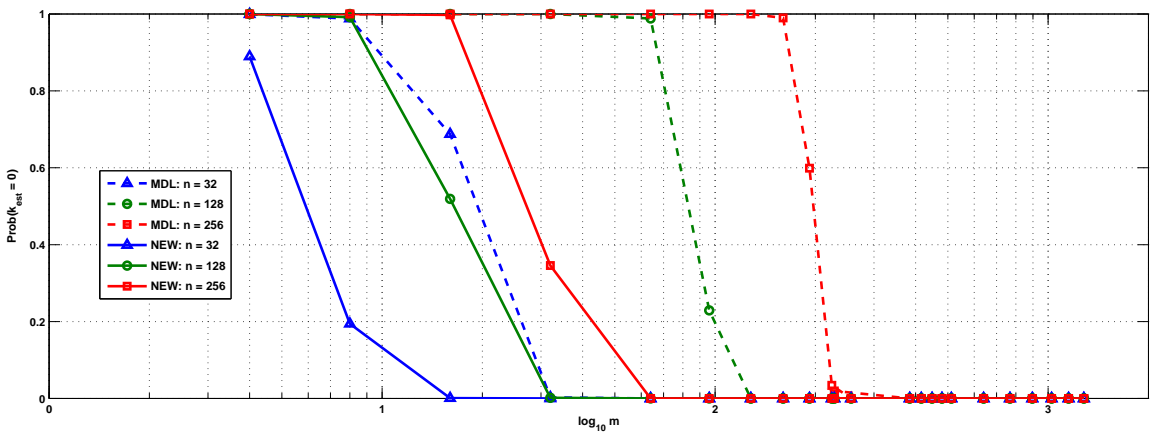}
}\\
\caption{Comparison of the performance of the new estimator in (\ref{eq:new estimator}) with the MDL estimator in (\ref{eq:WK MDL mod}) for various $n$ (system size) and $m$ (sample size).}
\label{fig:sim1}
\end{figure}

We first investigate the large sample consistency in the classical sense of $n$ fixed and $m \to \infty$. For a choice of $n$, and different values of $m$ we compute the empirical probability of detecting two signals. For large values of $m$ we expect both the new and the Wax-Kailath MDL estimator to detect both signals with high probability. Figure \ref{fig:sim1} plots the results obtained in the numerical simulations.

Figure \ref{fig:true consistency} shows that for $n = 32,128$, if $m$ is large enough then either estimator is able to detect both signals with high probability. However, the new estimator requires significantly less samples to do so than the Wax-Kailath MDL estimator. 

\begin{figure}[h]
\centering
\includegraphics[width=6.5in]{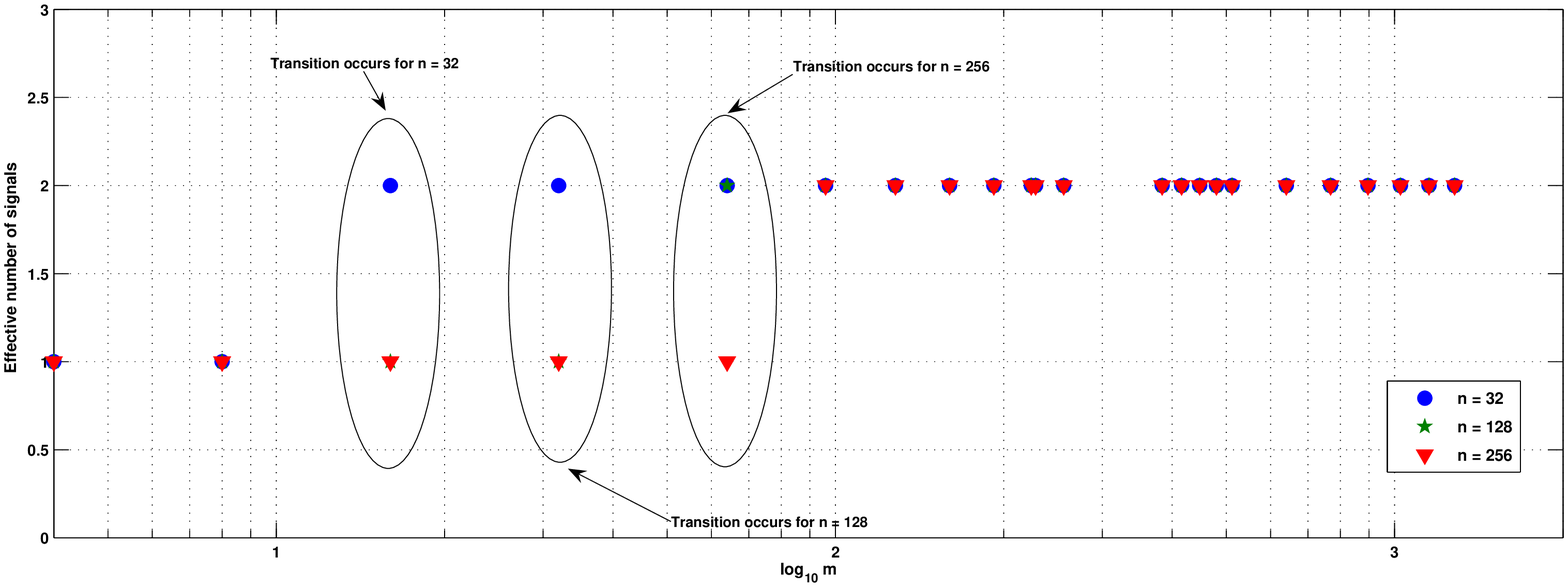}
\caption{The effective number of identifiable signals, computed using (\ref{eq:effective number of signals}) for the values of $n$ (system size) and $m$ (sample size) considered in Figure \ref{fig:sim1} when the population covariance matrix has two signal eigenvalues $\lambda_{1} =10$ and $\lambda_{2}=3$ and $n-2$ noise eigenvalues $\lambda_{3}=\ldots=\lambda_{n}=\sigma^{2}=1$.}
\label{fig:expn for simunder}
\end{figure}

Figures \ref{fig:sim1 phat1} and \ref{fig:sim1 phat0} plot the empirical probability of detecting one and zero signals, respectively, as a function of $m$ for various values of $n$. The results exhibit the chronically reported symptom of estimators underestimating the number of signals - this is not surprising given the discussion in Section \ref{sec:consistent number of signals}. Figure \ref{fig:expn for simunder} plots the effective number of identifiable signals $k_{eff}$, determined using (\ref{eq:effective number of signals}) for the various values of $n$ and $m$ considered. We observe that the values of $n$ and $m$ for which the empirical probability of the new estimator detecting one signal is high also correspond to regimes where $k_{eff} =1$. This suggests that the asymptotic concept of the effective number of signals remains relevant in a non-asymptotic regime as well. At the same time, however, one should not expect the signal identifiability/unidentifiability predictions in Section \ref{sec:consistent number of signals} to be accurate in the severely sample starved settings where $m \ll n$. For example, Figure \ref{fig:sim1 phat0} reveals that the new estimator detects zero signals with high empirical probability when there are less than $10$ samples available even though $k_{eff}=1$ in this regime from Figure \ref{fig:expn for simunder}. In the large system, relatively large sample size asymptotic limit, however, these predictions are accurate - we discuss this next.

When $m = 4n$ samples are available, Figure \ref{fig:numerics 1} shows that the proposed estimator consistently detects two signals while the Wax-Kailath MDL estimator does not. However, when $m = n/4$ samples are available, Figure \ref{fig:numerics 1} suggests that neither estimator is able to detect both the signals present. A closer examination of the empirical data presents a different picture. The population covariance has two signal eigenvalues $\lambda_{1}=10$ and $\lambda_{2}=3$ with the noise eigenvalues $\sigma^{2}=1$. Hence, when $m = n/4$, from (\ref{eq:effective rank}), the effective number of signals $k_{eff}=1$. Figure \ref{fig:numerics 2} shows that for large $n$ and  $m = n/4$, the new estimator consistently estimates one signal, as expected. We remark that that the signal eigenvalue $\lambda_{2}$ which is asymptotically unidentifiable falls exactly on the threshold in (\ref{eq:effective rank}). The consistency of the new estimator with respect to the effective number of signals corroborates the asymptotic tightness of the fundamental limit of sample eigenvalue based detection. On inspecting Tables \ref{tab:sim2}-(b) and \ref{tab:sim2}-(d) it is evident that the Wax-Kailath MDL estimator consistently underestimates the effective number of signals in the large system, large sample size limit.

Table \ref{tab:sim3} provides additional evidence for Conjecture \ref{conj:consistency}. We offer Tables \ref{tab:sim3}-(c) and \ref{tab:sim3}-(d) as evidence for the observation that large system, large sample consistency aside, the rate of convergence can be arbitrary slow and cannot be entirely explained by the metric $\z{k_{eff}}$ in (\ref{eq:zjsep}).

\begin{figure}[t]
\centering
\subfigure[Empirical probability that $\widehat{k} =2$ for various $n$ and fixed $n/m$.]
{
\label{fig:numerics 1}
\includegraphics[width=5.2in]{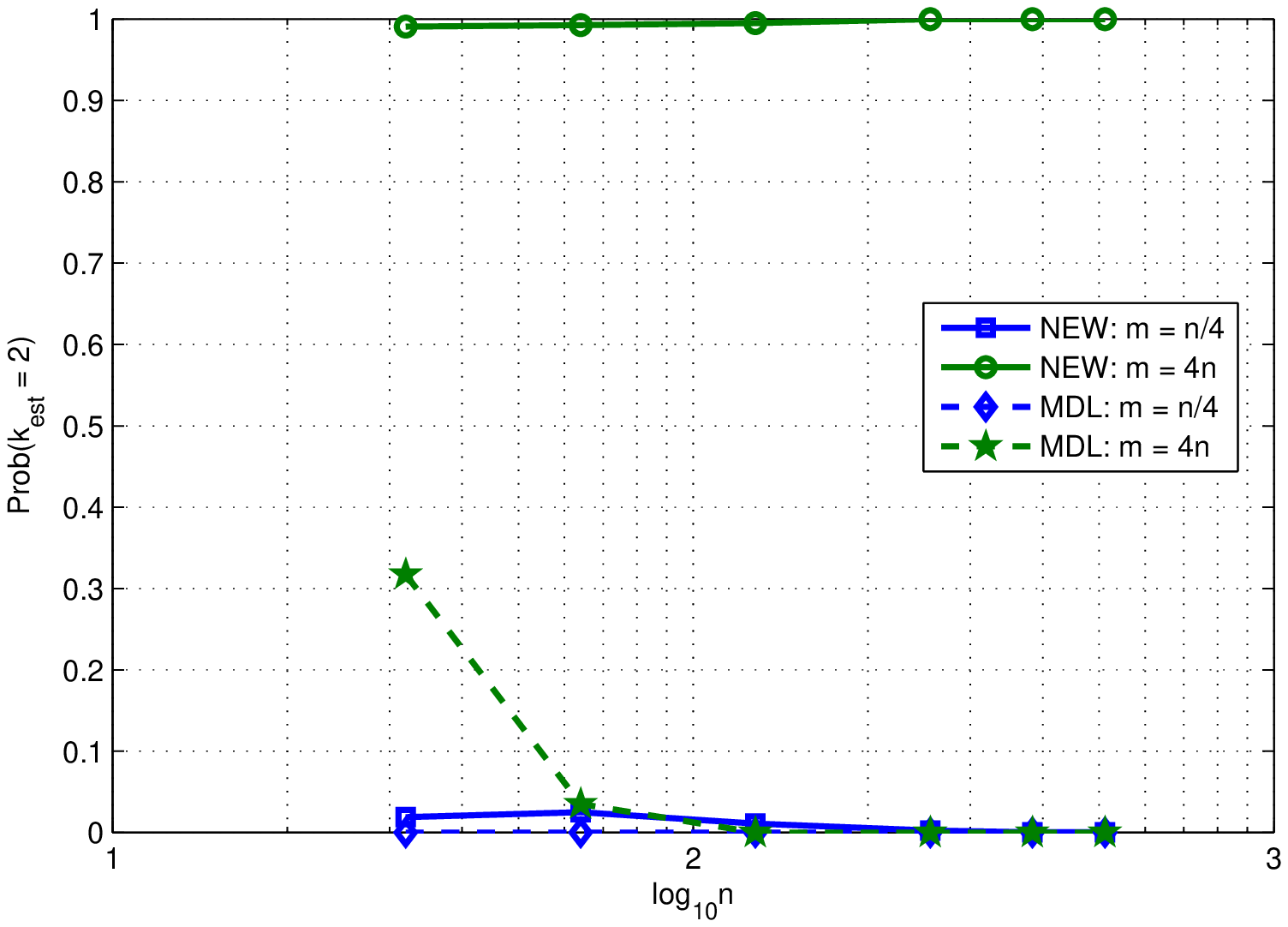}
}\\
\subfigure[Empirical probability that $\widehat{k} =1$ for various $n$ and fixed $n/m$.]
{
\label{fig:numerics 2}
\includegraphics[width=5.2in]{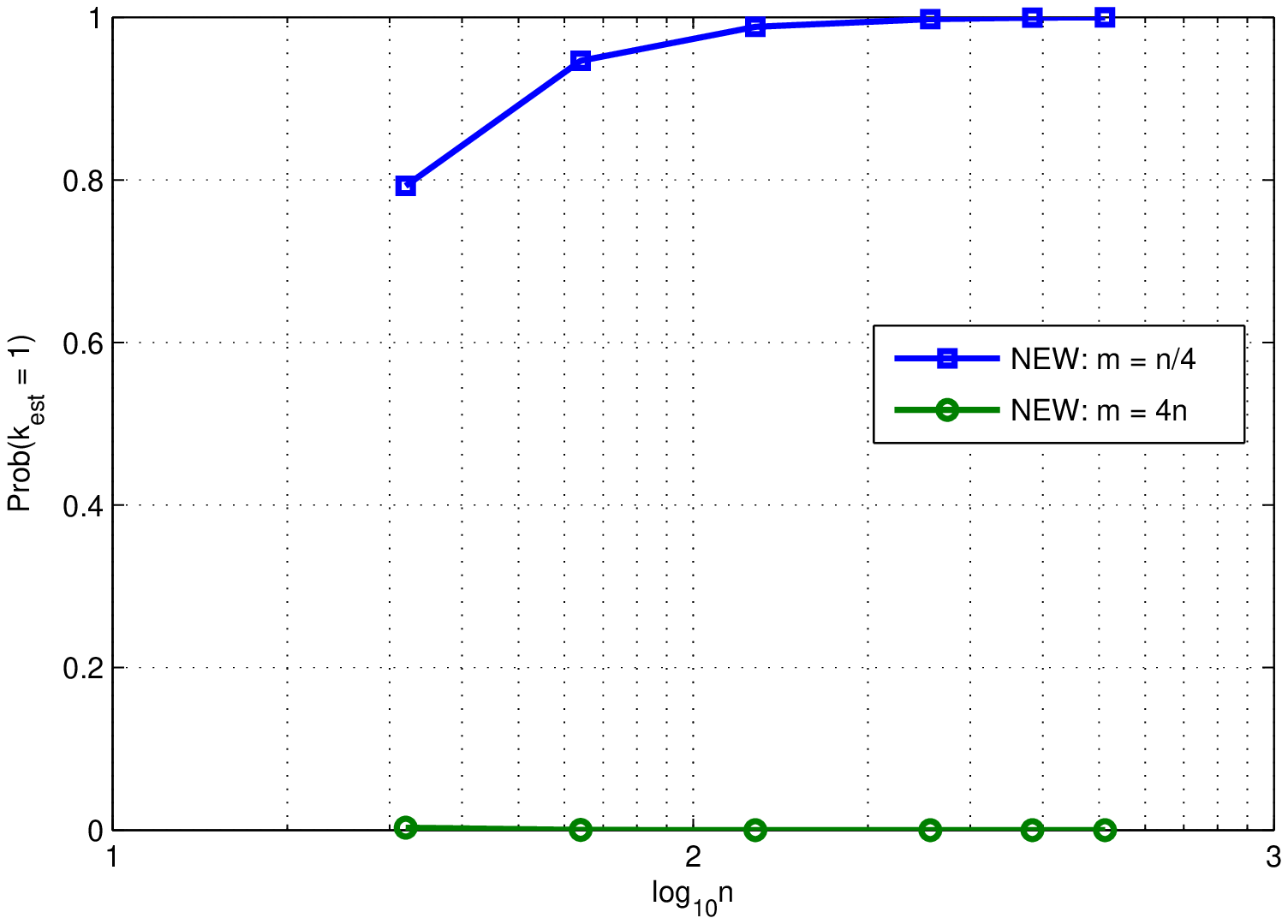}
}
\caption{Comparison of the performance of the new estimator in (\ref{eq:new estimator}) with the MDL estimator in (\ref{eq:WK MDL mod}) for various values of $n$ (system size) with $m$ (sample size) such that $n/m$ is fixed.}
\label{fig:numerics}
\end{figure}

\begin{table}
\centering
\includegraphics[width=5.5in]{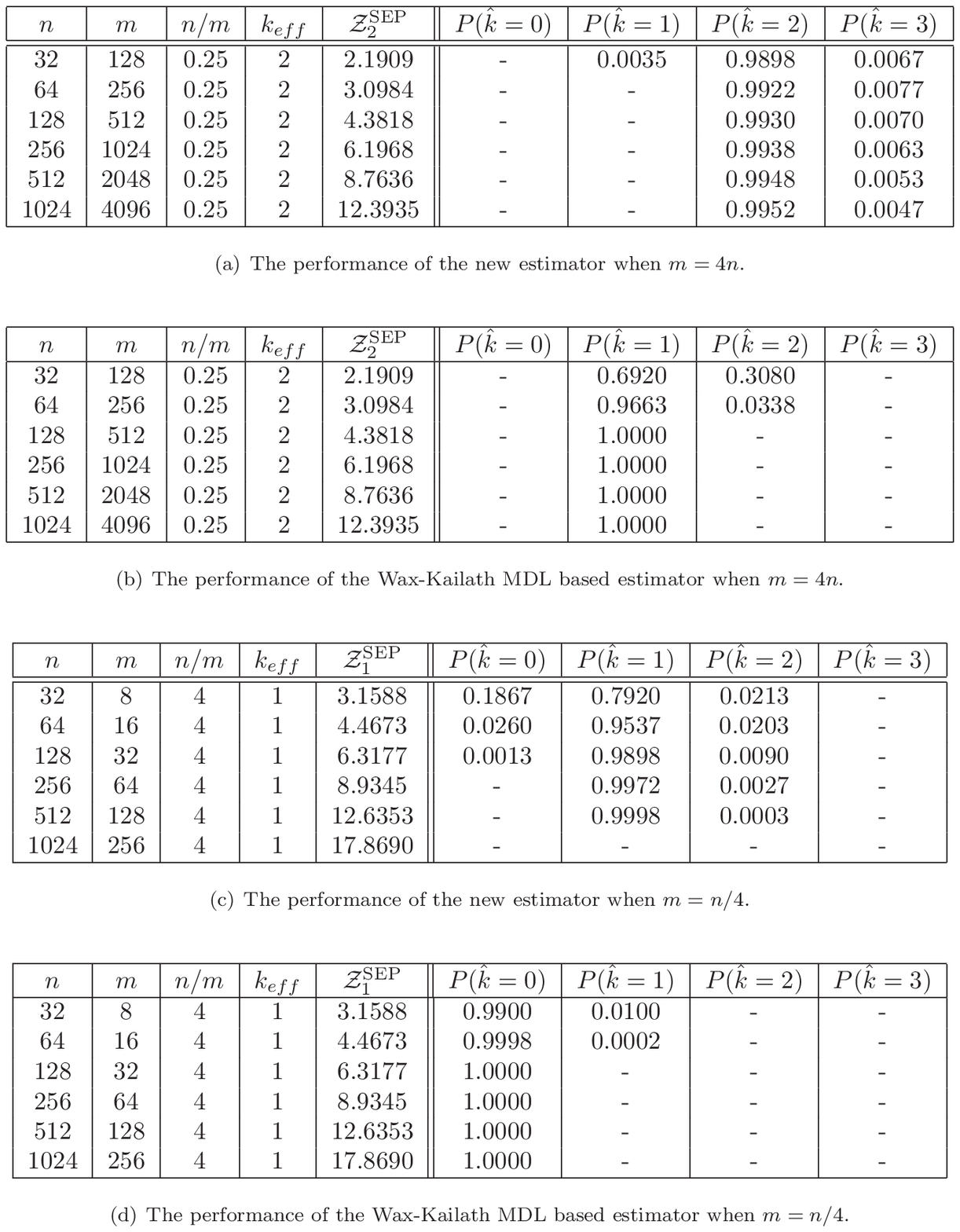}
\caption{Comparison of the empirical performance of the new estimator in (\ref{eq:new estimator}) with the Wax-Kailath MDL estimator in (\ref{eq:WK MDL mod}) when the population covariance matrix has two signal eigenvalues $\lambda_{1} =10$ and $\lambda_{2}=3$ and $n-2$ noise eigenvalues $\lambda_{3}=\ldots=\lambda_{n}=\sigma^{2}=1$.  The effective number of identifiable signals is computed using (\ref{eq:effective number of signals}) while the separation metric $\z{j}$ is computed for $j=k_{eff}$ using (\ref{eq:zjsep}). Here $n$ denotes the system size, $m$ denotes the sample size and the snapshot vectors, modelled in (\ref{eq:superposition problem}) are taken to be complex-valued.}
\label{tab:sim2}
\end{table}

\begin{table}
\centering
\includegraphics[width=5.5in]{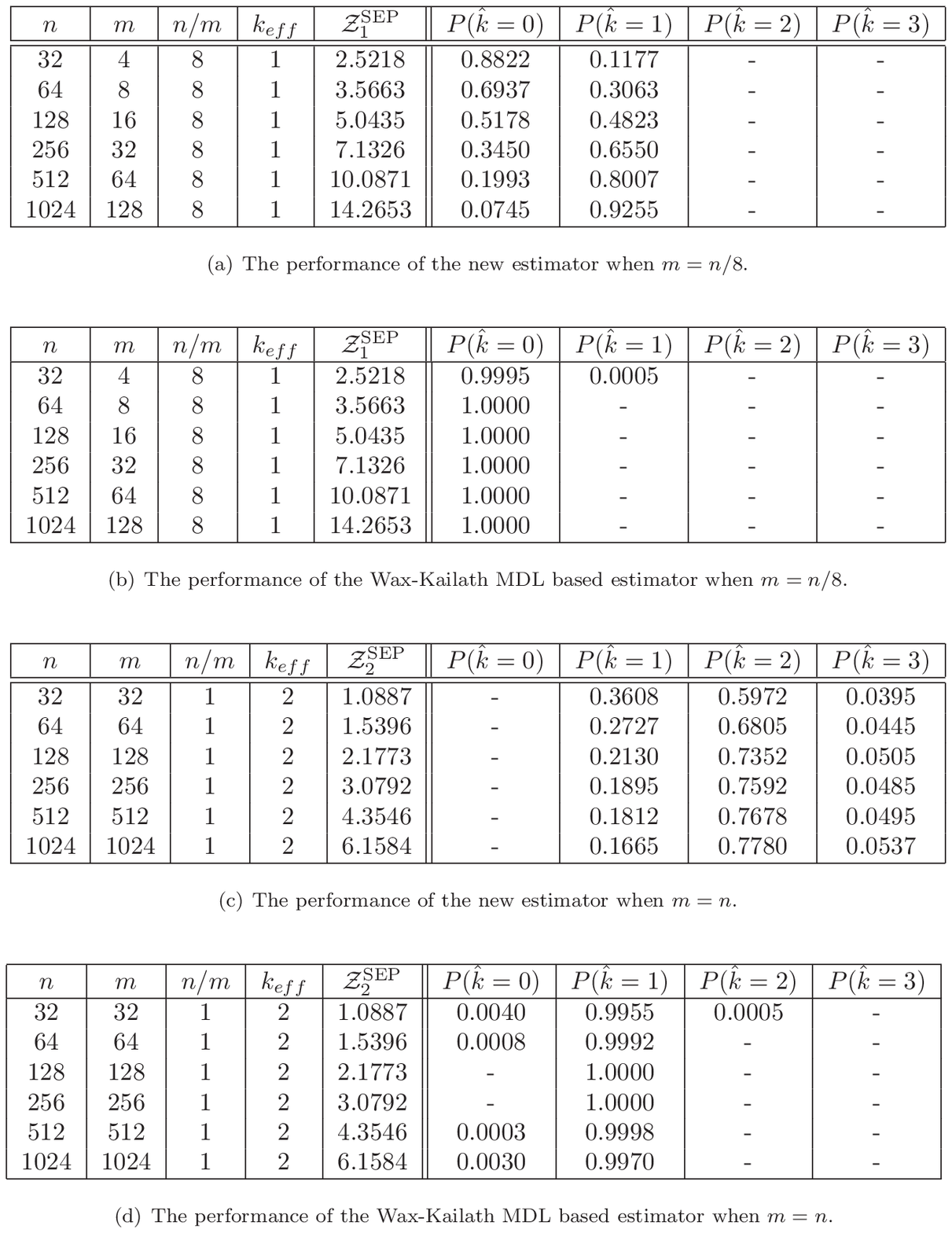}
\caption{Comparison of the empirical performance of the new estimator in (\ref{eq:new estimator}) with the Wax-Kailath MDL estimator in (\ref{eq:WK MDL mod}) when the population covariance matrix has two signal eigenvalues $\lambda_{1} =10$ and $\lambda_{2}=3$ and $n-2$ noise eigenvalues $\lambda_{3}=\ldots=\lambda_{n}=\sigma^{2}=1$.  The effective number of identifiable signals is computed using (\ref{eq:effective number of signals}) while the separation metric $\z{j}$ is computed for $j=k_{eff}$ using (\ref{eq:zjsep}). Here $n$ denotes the system size, $m$ denotes the sample size and the snapshot vectors, modelled in (\ref{eq:superposition problem}) are taken to be complex-valued.}
\label{tab:sim3}
\end{table}

\section{Concluding remarks}\label{sec:lrcf conclusion}

We have developed an information theoretic approach for detecting the number of signals in white noise from the sample eigenvalues alone. The proposed estimator explicitly takes into account the blurring of the sample eigenvalues due to the finite size. The stated conjecture on the consistency of the algorithm, in both the $n$ fixed, $m \to \infty$ sense and the $n,m(n) \to \infty$ with $n/m(n) \to c$ sense remains to be proven. It would be interesting to investigate the impact of a broader class of penalty functions on the consistency, strong or otherwise, in both asymptotic regimes, in the spirit of \cite{zhao86a}. 

In future work, we plan to address the problem of estimating the number of high-dimensional signals in noise with arbitrary covariance \cite{zhao86b}, using relatively few samples when an independent estimate of the noise sample covariance matrix, that is itself formed from relative few samples, is available. This estimator will also be of the form in \ref{eq:new estimator} and will exploit the analytical characterization of properties of the traces of powers of random Wishart matrices with a covariance structure that is also random \cite{raj05:annals}.

It remains an open question to analyze such signal detection algorithms in the Neyman-Pearson sense of finding the most powerful test that does not exceed a threshold probability of false detection. Finer properties, perhaps buried in the rate of convergence to the asymptotic results used, might be useful in this context. In the spirit of Wax and Kailath's original work, we developed a procedure that did not require us to make any subjective decisions on setting threshold levels. Thus, we did not consider largest eigenvalue tests in sample starved settings of the sort developed in \cite{johnstone01,imj06a} and the references therein. Nevertheless, if the performance can be significantly improved using a sequence of nested hypothesis tests, then this might be a price we might be ready to pay. This is especially true for the detection of low-level signals right around the threshold where the asymptotic results suggest that it becomes increasingly difficult, if not impossible, to detect signals using the sample eigenvalues alone.

\section*{Acknowledgements}

We thank Arthur Baggeroer, William Ballance and the anonymous reviewers for their feedback and encouragement. We are especially grateful to the associate editor, Erik Larsson, for his accommodating our multiple requests for extensions so that we could incorporate the reviewers' excellent suggestions into the manuscript. The first author was supported by an Office of Naval Research Special Postdoctoral award in Ocean Acoustics under grant N00014-07-1-0269. The authors were partially supported by NSF Grant DMS-0411962.

% starts a section numbered zero.)
%
% Use this command to get the appendices' numbers in "A", "B" instead of the
% default capitalized Roman numerals ("I", "II", etc.).
% However, the capital letter form may result in awkward subsection numbers
% (such as "A-A"). Capitalized Roman numerals are the default.
%\useRomanappendicesfalse
%

% trigger a \newpage just before the given reference
% number - used to balance the columns on the last page
% adjust value as needed - may need to be readjusted if
% the document is modified later
%\IEEEtriggeratref{8}
% The "triggered" command can be changed if desired:
%\IEEEtriggercmd{\enlargethispage{-5in}}

% references section
% NOTE: BibTeX documentation can be easily obtained at:
% http://www.ctan.org/tex-archive/biblio/bibtex/contrib/doc/

% can use a bibliography generated by BibTeX as a .bbl file
% standard IEEE bibliography style from:
% http://www.ctan.org/tex-archive/macros/latex/contrib/supported/IEEEtran/bibtex
\bibliographystyle{IEEEtran}
% argument is your BibTeX string definitions and bibliography database(s)
\bibliography{randbib}

\def\cprime{$'$} \def\cprime{$'$}
\begin{thebibliography}{10}
\providecommand{\url}[1]{#1}
\csname url@rmstyle\endcsname
\providecommand{\newblock}{\relax}
\providecommand{\bibinfo}[2]{#2}
\providecommand\BIBentrySTDinterwordspacing{\spaceskip=0pt\relax}
\providecommand\BIBentryALTinterwordstretchfactor{4}
\providecommand\BIBentryALTinterwordspacing{\spaceskip=\fontdimen2\font plus
\BIBentryALTinterwordstretchfactor\fontdimen3\font minus
  \fontdimen4\font\relax}
\providecommand\BIBforeignlanguage[2]{{%
\expandafter\ifx\csname l@#1\endcsname\relax
\typeout{** WARNING: IEEEtran.bst: No hyphenation pattern has been}%
\typeout{** loaded for the language `#1'. Using the pattern for}%
\typeout{** the default language instead.}%
\else
\language=\csname l@#1\endcsname
\fi
#2}}

\bibitem{wishart28a}
J.~Wishart, ``The generalized product moment distribution in samples from a
  normal multivariate population,'' \emph{Biometrika}, vol. 20 A, pp. 32--52,
  1928.

\bibitem{kailath-wax}
M.~Wax and T.~Kailath, ``Detection of signals by information theoretic
  criteria,'' \emph{IEEE Trans. Acoust. Speech Signal Process.}, vol.~33,
  no.~2, pp. 387--392, 1985.

\bibitem{wong90a}
K.~M. Wong, Q.~Zhang, J.~P. Reilly, and .~C. Yi, ``On information theoretic
  criteria for determining the number of signals in high resolution array
  rocessing,'' \emph{IEEE Trans. Acoustics, Speech, Signal Process.}, vol.~38,
  pp. 1959--1971, November 1990.

\bibitem{shah94a}
A.~A. Shah and D.~W. Tufts, ``Determination of the dimension of a signal
  subspace from short data records,'' \emph{IEEE Trans. Signal Process.}, vol.
  SP-42, pp. 2531--2535, September 1994.

\bibitem{liavas01a}
A.~P. Liavas and P.~A. Regalia, ``On the behavior of information theoretic
  criteria for model order selection,'' \emph{IEEE Trans. Signal Process.},
  vol.~49, no.~8, pp. 1689--1695, August 2001.

\bibitem{fishler00a}
E.~Fishler and H.~Messer, ``On the use of order statistics for improved
  detection of signals by the mdl criterion,'' \emph{IEEE Trans. of Signal
  Process.}, vol.~48, no.~8, pp. 2242--2247, August 2000.

\bibitem{fishler02a}
E.~Fishler, M.~Grossman, and H.~Messer, ``Detection of signals by information
  theoretic criteria: {G}eneral asympstotic performance analysis,'' \emph{IEEE
  Trans. Signal Process.}, vol.~50, no.~5, pp. 1027--1036, May 2002.

\bibitem{johansson_clt_herm}
K.~Johansson, ``On fluctuations of random {H}ermitian matrices,'' \emph{Duke
  {M}ath. {J.}}, vol.~91, pp. 151--203, 1998.

\bibitem{jonsson82a}
D.~Jonsson, ``Some limit theorems for the eigenvalues of a sample covariance
  matrix,'' \emph{J. of Multivar. Anal.}, vol.~12, pp. 1--38, 1982.

\bibitem{BaiS04}
Z.~D. Bai and J.~W. Silverstein, ``{CLT} for linear spectral statistics of a
  large dimensional sample covariance matrix,'' \emph{Annals of Probability},
  vol.~32, pp. 553--605, 2004.

\bibitem{dumitriu05a}
I.~Dumitriu and A.~Edelman, ``Global spectrum fluctuations for the
  {$\beta$}-{H}ermite and {$\beta$}-{L}aguerre ensembles via matrix models,''
  \emph{J. Math. Phys.}, vol.~47, no.~6, pp. 063\,302, 36, 2006.

\bibitem{BaikS06}
J.~Baik and J.~W. Silverstein, ``Eigenvalues of large sample covariance
  matrices of spiked population models,'' \emph{Journal of Multivariate
  Analysis}, no.~6, pp. 1382--1408, 2006.

\bibitem{Paul05a}
D.~Paul, ``Asymptotics of sample eigenstructure for a large dimensional spiked
  covariance model,'' Stanford University,'' Technical Report, 2005, {\small
  {\tt http://anson.ucdavis.edu/\~\/debashis/techrep/eigenlimit.pdf}} (To
  appear in 2007 in {\it Statistica Sinica}).

\bibitem{BaikBP04}
J.~Baik, G.~Ben~Arous, and S.~P{\'e}ch{\'e}, ``Phase transition of the largest
  eigenvalue for nonnull complex sample covariance matrices,'' \emph{Ann.
  Probab.}, vol.~33, no.~5, pp. 1643--1697, 2005.

\bibitem{raj:thesis}
R.~R. Nadakuditi, ``{A}pplied {S}tochastic {E}igen-{A}nalysis,'' Ph.D.
  dissertation, {M}assachusetts {I}nstitute of {T}echnology, February 2007,
  {D}epartment of {E}lectrical {E}ngineering and {C}omputer {S}cience.

\bibitem{bartlett54a}
M.~S. Bartlett, ``A note on the multiplying factors for various $\chi^{2}$
  approximations,'' \emph{J. Roy. Stat. Soc., ser. B}, vol.~16, pp. 296--298,
  1954.

\bibitem{lawley56a}
D.~N. Lawley, ``Tests of significance of the latent roots of the covariance and
  correlation matrices,'' \emph{Biometrica}, vol.~43, pp. 128--136, 1956.

\bibitem{akaike73a}
H.~Akaike, ``Information theory and an extension of the maximum likelihood
  principle,'' in \emph{Second International Symposium on Inform. Theory
  (Tsahkadsor, 1971)}.\hskip 1em plus 0.5em minus 0.4em\relax Budapest:
  Akad\'emiai Kiad\'o, 1973, pp. 267--281.

\bibitem{akaike74a}
------, ``A new look at the statistical model identification,'' \emph{IEEE
  Trans. Automatic Control}, vol. AC-19, pp. 716--723, 1974, system
  identification and time-series analysis.

\bibitem{schwartz78a}
G.~Schwartz, ``Estimating the dimension of a model,'' \emph{Annals of
  Statistics}, vol.~6, pp. 461--464, 1978.

\bibitem{rissanen78a}
J.~Rissanen, ``Modeling by shortest data description,'' \emph{Automatica},
  vol.~14, pp. 465--471, 1978.

\bibitem{vantrees02a}
H.~L.~V. Trees, \emph{Detection, Estimation, and Modulation Theory Part IV:
  Optimum Array Processing}.\hskip 1em plus 0.5em minus 0.4em\relax new York:
  John wiley and Sons, Inc., 2002.

\bibitem{zhao86a}
L.~C. Zhao, P.~R. Krishnaiah, and Z.~D. Bai, ``On detection of the number of
  signals in presence of white noise,'' \emph{J. Multivariate Anal.}, vol.~20,
  no.~1, pp. 1--25, 1986.

\bibitem{zhao86b}
------, ``On detection of the number of signals when the noise covariance
  matrix is arbitrary,'' \emph{J. Multivariate Anal.}, vol.~20, no.~1, pp.
  26--49, 1986.

\bibitem{stoica97a}
P.~Stoica and M.~Cedervall, ``Detection tests for array procesing in unknown
  correlated noise fields,'' \emph{IEEE Trans. Signal Process.}, vol.~45, pp.
  2351--2362, September 1997.

\bibitem{fishler05a}
E.~Fishler and H.~V. Poor, ``Estimation of the number of sources in unbalanced
  arrays via information theoretic criteria,'' \emph{IEEE Trans. of Signal
  Process.}, vol.~53, no.~9, pp. 3543--3553, September 2005.

\bibitem{abb:private}
A.~B. Baggeroer, ``Private communication.''

\bibitem{Anderson63}
T.~W. Anderson, ``Asymptotic theory of principal component analysis,''
  \emph{Annals of Math. Statistics}, vol.~34, pp. 122--248, 1963.

\bibitem{bansal91a}
N.~K. Bansal and M.~Bhandary, ``Bayes estimation of number of signals,''
  \emph{Ann. Inst. Statist. Math.}, vol.~43, no.~2, pp. 227--243, 1991.

\bibitem{larocque02a}
J.-R. Larocque, J.~P. Reilly, and W.~Ng, ``Particle filters for tracking and
  unknown number of sources,'' \emph{IEEE Trans. of Signal Processing},
  vol.~50, no.~12, pp. 2926--2937, December 2002.

\bibitem{imj06a}
I.~M. Johnstone, ``High dimensional statistical inference and random
  matrices,'' in \emph{Proc. International Congress of Mathematicians}, 2006,
  {\tt http://arxiv.org/abs/math/0611589}.

\bibitem{raj:acta}
A.~Edelman and N.~R. Rao, ``Random matrix theory,'' in \emph{Acta
  Numerica}.\hskip 1em plus 0.5em minus 0.4em\relax Cambridge University Press,
  2005, vol.~14, pp. 233--297.

\bibitem{marcenko67a}
V.~A. Mar{\v{c}}enko and L.~A. Pastur, ``Distribution of eigenvalues in certain
  sets of random matrices,'' \emph{Mat. Sb. (N.S.)}, vol. 72 (114), pp.
  507--536, 1967.

\bibitem{wachter78a}
K.~W. Wachter, ``The strong limits of random matrix spectra for sample matrices
  of independent elements,'' \emph{Annals of Probab.}, vol.~6, pp. 1--18, 1978.

\bibitem{silverstein95a}
J.~W. Silverstein and S.-I. Choi, ``Analysis of the limiting spectral
  distribution of large-dimensional random matrices,'' \emph{J. Multivariate
  Anal.}, vol.~54, no.~2, pp. 295--309, 1995.

\bibitem{silverstein95b}
J.~W. Silverstein, ``Strong convergence of the empirical distribution of
  eigenvalues of large dimensional random matrices,'' \emph{J. of Multivariate
  Anal.}, vol. 55(2), pp. 331--339, 1995.

\bibitem{dumitriu03a}
I.~Dumitriu and E.~Rassart, ``Path counting and random matrix theory,''
  \emph{Electronic Journal of Combinatorics}, vol.~7, no.~7, 2003, r-43.

\bibitem{casella90a}
G.~Casella and R.~L. Berger, \emph{Statistical inference}, ser. The Wadsworth
  \& Brooks/Cole Statistics/Probability Series.\hskip 1em plus 0.5em minus
  0.4em\relax Pacific Grove, CA: Wadsworth \& Brooks/Cole Advanced Books \&
  Software, 1990.

\bibitem{yinbai88a}
Y.~Q. Yin, Z.~D. Bai, and P.~R. Krishnaiah, ``On the limit of the largest
  eigenvalue of the large-dimensional sample covariance matrix,'' \emph{Probab.
  Theory Related Fields}, vol.~78, no.~4, pp. 509--521, 1988.

\bibitem{silverstein:private}
J.~W. Silverstein, ``Private communication.''

\bibitem{miron06}
S.~Miron, N.~Le~Bihan, and J.~Mars, ``Quaternion-{MUSIC} for vector-sensor
  array processing,'' \emph{IEEE Trans. on Signal Process.}, vol.~54, no.~4,
  pp. 1218--1229, April 2006.

\bibitem{kaveh87a}
M.~Kaveh, H.~Wang, and H.~Hung, ``On the theoretical performance of a class of
  estimators of the number of narrow-band sources,'' \emph{IEEE Trans.
  Acoustics, Speech, and Signal Process.}, vol. ASSP-35, no.~9, pp. 1350--1352,
  September 1987.

\bibitem{raj05:annals}
N.~R. Rao, J.~Mingo, R.~Speicher, and A.~Edelman, ``Statistical eigen-inference
  from large wishart matrices,'' submitted to the {\it Annals of Statistics}.
  Available online at \texttt{http://arxiv.org/abs/math/0701314}.

\bibitem{johnstone01}
I.~M. Johnstone, ``On the distribution of the largest eigenvalue in principal
  components analysis,'' \emph{Annals of {S}tatistics}, vol. 29(2), pp.
  295--327, 2001.

\end{thebibliography}
%
% <OR> manually copy in the resultant .bbl file
% set second argument of \begin to the number of references
% (used to reserve space for the reference number labels box)
% that's all folks
\end{document}